\def\BibTeX{{\rm B\kern-.05em{\sc i\kern-.025em b}\kern-.08em
    T\kern-.1667em\lower.7ex\hbox{E}\kern-.125emX}
}%
\newcommand{\BIBTEXDIR}{OL_bibtex}
\newcommand{\I}{\mathcal{I}}
\newcommand{\T}{\mathcal{T}}
\newcommand{\agent}{s}
\newcounter{inlineenum}
\renewcommand{\theinlineenum}{\alph{inlineenum}}
\newtheorem*{task*}{Problem}
\newtheorem*{assumptions*}{Assumptions}
\newtheorem*{notation*}{Notation}
\renewcommand{\L}{\mathcal{L}}
\newcommand{\secref}[1]{Sec.~\ref{#1}}
\newcommand{\lemref}[1]{Lemma~\ref{#1}}
\newcommand{\theoref}[1]{Thm.~\ref{#1}}
\newcommand{\propref}[1]{Prop.~\ref{#1}}
\renewcommand{\eqref}[1]{(\ref{#1})}
\newcommand{\figref}[1]{Fig.~\ref{#1}}
\newcommand{\ocpref}[1]{\ac{ocp}~\eqref{#1}}
\newcommand{\corref}[1]{Corollary~\ref{#1}}
\newcommand{\myqed}{\hfill~\qed}
\newcommand{\R}{\mathbb{R}}
\newcommand{\ift}{\mathrm{~if~}}
\newcommand{\othert}{\mathrm{~otherwise}}
\newcommand{\andt}{\mathrm{~and~}}
\newcommand{\dt}{\textup{d}t}
\newtheorem{theorem}{Theorem}
\newtheorem{proposition}{Proposition}
\newtheorem{lemma}{Lemma}
\newtheorem{corollary}{Corollary}
\newtheorem{remark}{Remark}
\newacronym[plural=LCQPs,firstplural=Linear Complementarity Quadratic Programs (LCQP)]{lcqp}{LCQP}{Linear Complementarity Quadratic Program}
\newacronym[plural=MPCCs,firstplural=Mathematical Programs with Complemenetarity Constraints (MPCC)]{mpcc}{MPCC}{Mathematical Program with Complemenetarity Constraints}
\newacronym[plural=QPs,firstplural=Quadratic Programs (QP)]{qp}{QP}{Quadratic Program}
\newacronym[plural=QCQPs,firstplural=Quadratically Constrained Quadratic Programs (QPQP)]{qcqp}{QCQP}{Quadratically Constrained Quadratic Program}
\newacronym[plural=NLPs,firstplural=Nonlinear Programs (NLP)]{nlp}{NLP}{Nonlinear Program}
\newacronym[plural=OCPs,firstplural=Optimal Control Problems (OCP)]{ocp}{OCP}{Optimal Control Problem}
\newacronym[plural=MIQPs,firstplural=Mixed Integer Quadratic Programs (MIQP)]{miqp}{MIQP}{Mixed Integer Quadratic Program}
\newacronym[plural=MINLPs,firstplural=Mixed Integer Non-Linear Programs (MINLP)]{minlp}{MINLP}{Mixed Integer Non-Linear Program}
\newacronym[plural=QPCCs,firstplural=Quadratic Programs with Complementarity Constraints (QPCC)]{qpcc}{QPCC}{Quadratic Program with Complementarity Constraints}
\newacronym[plural=QPLCCs,firstplural=Quadratic Programs with Linear Compementarity Constraints (QPLCC)]{qplcc}{QPLCC}{Quadratic Program with Linear Compementarity Constraints}
\newacronym[plural=FDIs,firstplural=Filippov Differential Inclusions (FDI)]{fdi}{FDI}{Filippov Differential Inclusion}
\newacronym[plural=ODEs,firstplural=Ordinary Differential Equations (ODE)]{ode}{ODE}{Ordinary Differential Equation}
\newacronym[plural=TSPs,firstplural=Traveling Salesperson Problems (TSP)]{tsp}{TSP}{Traveling Salesperson Problem}
\newacronym{sqp}{SQP}{Sequential Quadratic Programming}
\newacronym{scp}{SCP}{Sequential Convex Programming}
\newacronym{licq}{LICQ}{Linear Independence Constraint Qualification}
\newacronym{mfcq}{MFCQ}{Mangasarian-Fromovitz Constraint Qualification}
\newacronym{kkt}{KKT}{Karush-Kuhn-Tucker}
\newacronym{pm}{PM}{Persistent Monitoring}
\newacronym{pmp}{PMP}{Pontryagin Minimum Principle}
\newacronym{lls}{LLS}{Linear Least Squares}
\newacronym{ipa}{IPA}{Infinitesimal Perturbation Analysis}
\newacronym{ad}{AD}{Automatic Differentiation}
\newacronym{mpc}{MPC}{Model Predictive Control}
\newcommand{\ac}[1]{\gls*{#1}}
\newcommand{\acp}[1]{\glspl*{#1}}
\title{A Bilevel Optimization Scheme for Persistent Monitoring}
\author{\IEEEauthorblockN{}
\IEEEauthorblockA{\textit{Systems Engineering} \\
\textit{Boston University}\\
Massachusetts, USA
}
}
\author{Jonas Hall$^{1}$, Logan E. Beaver$^{1}$, Christos G. Cassandras$^{1,2}$, Sean B. Andersson$^{1,3}$ 
\thanks{This work was supported in part by NSF under grants ECCS-
1931600, DMS-1664644, CNS-1645681, CNS-2149511, by AFOSR under FA9550-19-1-0158, by ARPA-E under DE-AR0001282, and by the MathWorks.}%
\thanks{$^{1}$Division of Systems Engineering, Boston University, USA}%
\thanks{$^{2}$Department of Electrical and Computer Engineering}%
\thanks{$^{3}$Division of Mechanical Engineering, Boston University, USA}%
\thanks{{\tt\small \{hallj, \dots \}@bu.edu}}
}%
\begin{document}

\clearpage\maketitle

\begin{abstract}
    In this paper we study an infinite-horizon persistent monitoring problem in a two-dimensional mission space containing a finite number of statically placed targets, at each of which we assume a constant rate of uncertainty accumulation. Equipped with a sensor of finite range, the agent is capable of reducing the uncertainty of nearby targets. We derive a steady-state minimum time periodic trajectory over which each of the target uncertainties is driven down to zero during each visit. A hierarchical decomposition leads to purely local optimal control problems, coupled via boundary conditions. We optimize both the local trajectory segments as well as the boundary conditions in an on-line bilevel optimization scheme. 
\end{abstract}

\section{Introduction}

Monitoring a dynamically changing environment in an efficient and cost-feasible manner has long since attracted attention due to its broad applicability to areas such as ocean monitoring~\cite{smith2011persistentOcean, alam2018data}, forest monitoring~\cite{naderi2022sharing}, wildfire surveillance~\cite{casbeer2006cooperative}, data harvesting~\cite{zhu2022control, lee2006efficient}, or particle tracking~\cite{pinto2021tracking}. A  common approach is to place static sensors in order to maximize the monitored area or to maximize event detection probability, which in the literature is known as the coverage control problem~\cite{cortes2004coverage}. However, employing a large number of static sensors can be expensive and inflexible. Hardware and software advances have enabled the replacement of static sensors by equipping the sensors to autonomous agents. The coverage control problem was thus extended to the \ac{pm} problem~\cite{cassandras2011optimal}.

Over the last decade this problem has accumulated a rich set of formulations and variations. For some formulations the dynamic environment consists of a connected and typically compact subset of $\R^n$. In this setting the agents are often tasked to detect rogue elements appearing at unknown locations~\cite{boldrer2022time}, or to minimize the cumulative average value of a dynamically changing scalar field~\cite{lin2014optimal}. Other formulations, as is the case in this paper, focus on a finite set of targets within the environment. Typical tasks then consist of detecting stochastic events at known locations~\cite{yu2015persistent}, or minimizing the maximum revisit time along a periodic trajectory~\cite{hari2020optimal}. Usually the targets are spatially static, however, some formulations consider mobile targets as well~\cite{anderson2014stochastic,hall2022optimal,wang2023spatio}.

A common subproblem of \ac{pm} tasks consists of determining a periodic visiting sequence of all the targets, which in and of itself is NP-hard since it is more general than the \ac{tsp}, due to the dynamic nature of the problem. Even if a good visiting sequence is determined, computing optimal agent trajectories (with respect to a given metric such as minimum time or minimum energy) remains challenging. In order to monitor a given target we require the agent to be close to it. However, the more time the agent spends monitoring one target, the more cost is accumulated at all other targets. On the other hand, if the agent moves too quickly past a target, then the local cost, and thus also the global cost is insufficiently reduced. A challenge in designing trajectories is to manage this trade-off. Due to the difficulties of solving \ac{pm} problems, they are often decomposed and many contributions focus on specific subproblems. One such decomposition is the path-velocity decomposition~\cite{kant1986toward}. 
However, this decomposition is always suboptimal unless the agent's local sensing capability is independent of the target-agent distance. Examples for velocity controllers along a given path can be found in~\cite{smith2011persistent,song2014optimal}. The vast majority of methods for trajectory optimization work off-line~\cite{lin2014optimal,ostertag2022trajectory}, however the authors of~\cite{notomista2021online} introduced an on-line trajectory optimization approach. Inherently different to the approach of decomposition is that of abstraction. Such methods formulate the mission space using a graph topology, where each target is described as a node and edges between two targets reflect the travel cost between those targets~\cite{alamdari2014persistent,welikala2021event,hari2020optimal}. Such methods aim at solving the target visiting sequence, instead of directly controlling the agents.

In this paper we consider a \ac{pm} formulation with a single agent and $M$ targets, each of which is associated with an internal state that models uncertainty. The goal is to minimize the infinite-horizon average uncertainty. We introduce a method that optimizes the agent's trajectory on-line. Similar to~\cite{ostertag2022trajectory}, we decompose the problem into purely local \acp{ocp}, the solutions of which provide decoupled trajectory segments. We then solve these \acp{ocp} using a direct multiple shooting approach~\cite{rao2009survey}. While modern solvers are able to treat optimal control problems with hybrid dynamics directly~\cite{nurkanovic2022nosnoc,hall2021sequential}, we utilize this decomposition for the simple reason that the dimension of the local problems become independent of the number of targets. The contributions of this paper are 
\begin{enumerate}
    \item \theoref{theorem:exact:reformulation}, which shows the existence of an exact relaxation for the local \ocpref{ocp:local} with hybrid dynamics, and
    \item a bilevel optimization scheme that optimizes the agent's trajectory on-line. 
\end{enumerate}

The remainder of this paper is organized as follows. \secref{sec:problem:formulation} introduces the considered \ac{pm} problem. In \secref{sec:hierarchical:decomposition} we introduce the decomposition into two layers: a sequence planner on the higher level; and a low-level layer generating the individual trajectory segments. In \secref{sec:solution:local:ocp} we analyze the low-level problems in detail, the main result being \theoref{theorem:exact:reformulation}, which shows the existence of an exact relaxation. We then utilize a gradient descent method in \secref{sec:solving:coordinator} in order to optimize the boundary conditions that are imposed on the lower levels. \secref{sec:numerical:results} discusses results in a comparison to a greedy solution.

\section{Problem Formulation}\label{sec:problem:formulation}

We are interested in a \ac{pm} problem with a single agent and $M$ targets indexed by $\T = \{1,2,\dots,M\}$. We consider first-order agent dynamics $\dot{s}(t) = u(t)$ with bounded control input $\| u(t) \| \leq 1$, where $\| \cdot \|$ denotes the Euclidean norm. The fixed positions of the targets are denoted by $x_1, x_2, \dots, x_M \in \R^2$. We assume that each target is associated with an internal state that models a measure of uncertainty $R_i$, which evolves according to the dynamics $\dot{R}_i = f_{R_i}$ given by
\begin{equation*}
    f_{R_i}(R_i, s) = \begin{cases}
        0, &~R_i = 0, A_i - B_i p_i(s) < 0, \\
        A_i - B_i p_i(s) &\othert, 
    \end{cases}
\end{equation*}
where $B_i > A_i$, and 
\begin{equation*}
    p_i(s(t)) = \max \left\{0, 1 - \frac{\| s(t) - x_i\|^2}{r_i^2}\right\}
\end{equation*}
is the monitoring model with sensing range $r_i$. It is fairly straightforward to consider more complicated sensing functions so long as they remain monotonic in the agent-target distance.

We are interested in minimizing the average uncertainty over an infinite-horizon. Solving this problem is very challenging. Previous results~\cite{cassandras2012optimal,zhou2018optimal,hall2022optimal} indicate that optimal trajectories typically drive the uncertainty of each target to zero before visiting another target. Under the assumption that this holds for each target visit, minimizing the average uncertainty is achieved by minimizing the period of a steady-state trajectory. Motivated by this behavior we formulate the following persistent monitoring problem. 

\begin{task*}
    Find a steady-state minimum time periodic trajectory over which each target uncertainty is drained, i.e., driven down to zero, during each visit.
\end{task*}

\begin{assumptions*}
Throughout this paper we assume that: 1) the initial uncertainties $R_i(0)$ are known for all targets $i \in \T$; 2) the sensing areas around the targets do not intersect; and 3) there exists a steady-state solution.
\end{assumptions*}

These assumptions are typical in the given \ac{pm} setting~\cite{smith2011persistent,zhou2018optimal,hall2022optimal}. Note that assumption 2) is fundamental for the decomposition, whereas assumption 3) is fundamental for convergence. We remark that the existence of steady-state trajectories strongly depends on the topology of the mission space as well as the parameters $A, B$, and $r$. While the existence of steady-state solutions can be proven when the uncertainty model is replaced by a Kalman filter model~\cite{pinto2022multi, ostertag2022trajectory}, this remains an open problem in the given setting.

\section{Hierarchical Decomposition}\label{sec:hierarchical:decomposition}
With the problem set up, it is a natural task to identify characterizing properties of an optimal periodic trajectory. It is immediately evident that the agent is required to visit each of the targets in order to drive their uncertainties down to zero. This understanding directly induces a two-level hierarchy: a higher level with the objective of finding a target visiting sequence; and a lower level of steering the agent so as to 1) satisfy the target visiting sequence and 2) \textit{drain} each of the target uncertainties, i.e., drive them down to zero. We will formulate the low-level problems as \acp{ocp} and we refer to those problems as the \textit{local \acp{ocp}}, since solving them only requires knowledge of local information of the visited target. In order to connect the two levels, we introduce a coordinator,\footnote{The coordinator is not to be confused with coordinators in multi-agent systems, which coordinate information between agents. Here it coordinates information between trajectory segments.} which takes a visiting sequence and then coordinates the local trajectories by providing the boundary conditions of the local \acp{ocp}. Additionally, it is the coordinator's task to optimize those boundary conditions on-line. 

\begin{figure}
    \centering
    \begin{subfigure}{0.33\linewidth}
        \centering
        \includegraphics[width=0.9\linewidth]{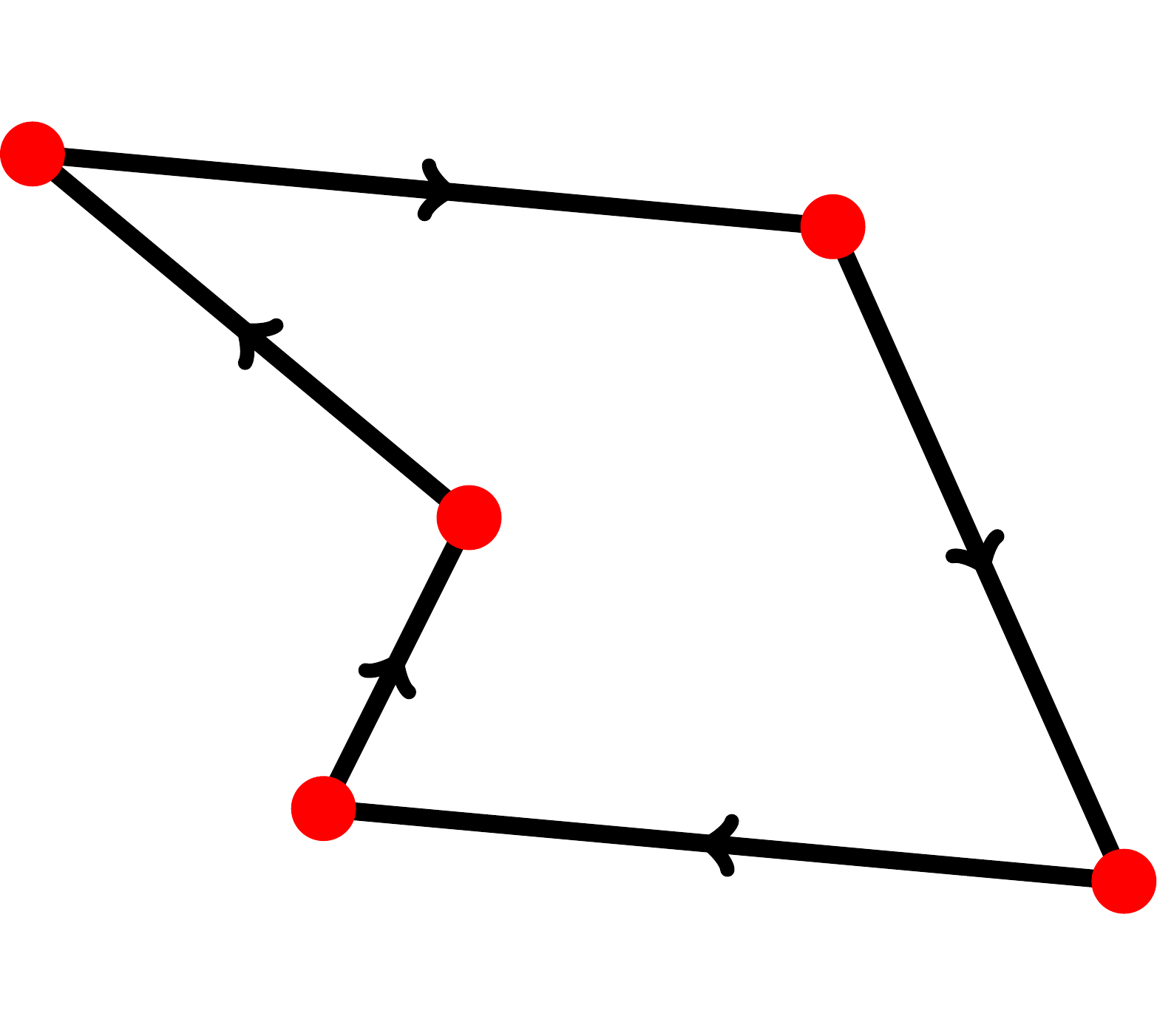}
        \hfill
        \caption{}
        \label{fig:sequence:planner}
    \end{subfigure}%
    \begin{subfigure}{0.33\linewidth}
        \centering
        \includegraphics[width=0.8\linewidth]{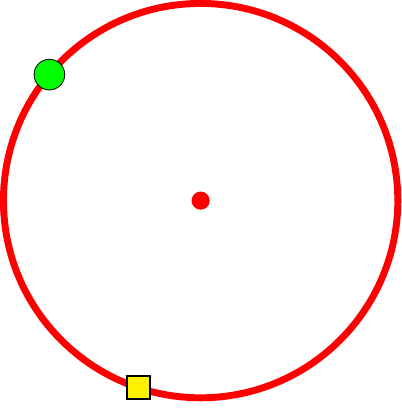}
        \caption{}
        \label{fig:coordinator}
    \end{subfigure}%
    \begin{subfigure}{0.33\linewidth}
        \centering
        \includegraphics[width=0.8\linewidth]{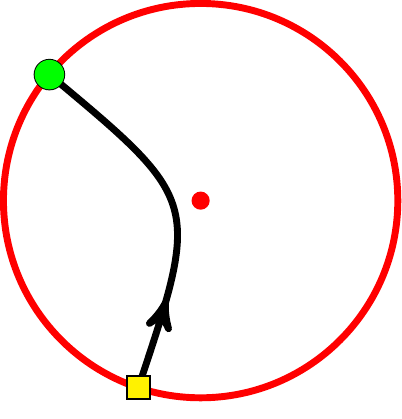}
        \caption{}
        \label{fig:local:OCP}
    \end{subfigure}%
    \caption{Illustrating the three optimization problems of the decomposition: sequence planning; entrance and departure point optimization; and local trajectory optimization.}
    \label{fig:graphical:hierarchies}
\end{figure}

The optimization problems within the individual levels are depicted in \figref{fig:graphical:hierarchies}: \ref{fig:sequence:planner} illustrates the problem of finding a target visiting sequence; \ref{fig:coordinator} illustrates the problem of optimizing the entrance (yellow square) and departure (green circle) points when visiting a target; and \ref{fig:local:OCP} depicts a local \ac{ocp}, which determines the trajectory that drives the target uncertainty to zero during the visit. Note that the red circle in~\ref{fig:coordinator} and~\ref{fig:local:OCP} depicts the sensing radius around the considered target. 

In this paper we focus on the optimization of the entrance and departure points together with the low-level trajectories. However, we remark that visiting sequences can be computed using a graph-based abstraction, e.g., via \ac{tsp} or other methods specifically designed for \ac{pm} problems~\cite{welikala2021event,hari2020optimal}. We further discuss this along with additional extensions in \secref{sec:conclusion:future:work}. 

\textbf{Coordinating the local trajectories.} 
From here on we assume that a periodic target visiting sequence $i_1, i_2, \dots i_K, i_1$ is provided. The coordinator is tasked to realize the visiting sequence and coordinate the requirement of driving the target uncertainties down to zero. To do this, we note that during the $k$th target visit, the agent begins sensing the target $i_k$ at a specific point in space, which we denote by $s_k^\varphi$ and refer to as the entrance point (yellow square in~\ref{fig:coordinator}). Similarly, there exists a departure point $s_k^\psi$ (green circle in~\ref{fig:coordinator}), i.e., a point at which the agent last sensed the target. The coordinator passes the entrance and departure points down to the local \ac{ocp} solver generating the local trajectory segments as discussed at the end of this section. In return, the local \acp{ocp} provide dual variables specifying the cost associated with the entrance and departure constraints. The coordinator then utilizes these dual variables to optimize the entrance and departure points with the goal of minimizing the total cycle time (see \secref{sec:solving:coordinator}).

\figref{fig:hierarchies} depicts the proposed workflow of the coordinator (dashed box). It receives an initial guess of the entrance and departure points generated from the visiting sequence (the generation is discussed in \secref{sec:solving:coordinator}). The coordinator then calls the local \ac{ocp} solver in an event-based fashion, i.e., whenever the agent starts or stops sensing a target. On completion of a cycle, the coordinator updates the entrance/departure points as well as the target uncertainties at cycle start, or terminates the algorithm if the cost gradients with respect to the entrance and departure points are sufficiently small and steady-state is reached. 

\begin{figure}
    \centering
    \includegraphics[width=0.75\linewidth]{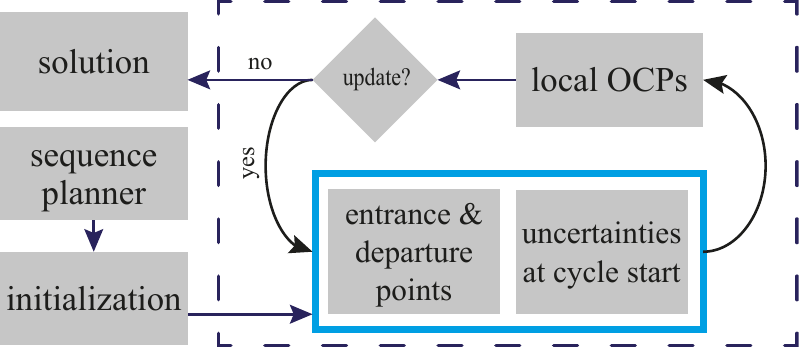}
    \caption{Illustrating the workflow of the coordinator, which is part of the agent's control system.}
    \label{fig:hierarchies}
\end{figure}

\textbf{Formulating the local OCPs.} 
There are two types of local \acp{ocp} to be solved: that of driving a target's uncertainty to zero (the \textit{draining problem}) and that of moving from the departure point $s_k^\psi$ of one target to the entrance point of the next target $s_{k+1}^\varphi$ (the \textit{switching problem}). 

Given an unconstrained environment and first-order dynamics, the switching problem becomes trivial as it is given by a maximal and constant control input that moves the agent from one point to another along a straight line. However, the current formulation is capable of adapting to other scenarios. For example, if obstacles are present in the environment then the problem becomes more complicated but can often still be achieved using  optimal control techniques~\cite{beaver2023IFAC}. We solely require the local switching problem to provide cost sensitivities with respect to the constraints that fix the boundary conditions $s_k^\psi$ and $s_{k+1}^\varphi$.

Let us now focus on the draining problem, which consists of finding a time optimal trajectory that drives the target uncertainty down to zero while satisfying the constraints imposed by the coordinator. Specifically, this is given by the \ac{ocp}
\begin{mini!}
    {u(\cdot), T_k}{\int_0^{T_k} \dt}{\label{ocp:local:first}}{}
    \addConstraint{\dot{s}(t)}{= u(t)}
    \addConstraint{\dot{R}_{i_k}(t)}{= f_{R_{i_k}}(R_{i_k}(t), s(t)) \label{ocp:local:first:R}}
    \addConstraint{\|u(t)\|^2}{\leq 1}
    \addConstraint{\min_{\tau \in [0, T_k]} R_{i_k}(\tau)}{= 0 \label{ocp:local:first:draining}}
    \addConstraint{s(0)}{= s_k^\varphi}
    \addConstraint{s(T_k)}{= s_k^\psi \label{ocp:local:first:exit}}
    \addConstraint{R_{i_k}(0)}{= \check{R}_k,}
\end{mini!}    
where $s_k^\varphi$ and $s_k^\psi$ denote the respective entrance and departure points, and $\check{R}_k$ denotes the uncertainty at arrival time, all of which are passed down from the coordinator. At first glance, this problem seems challenging to solve due to the non-smoothness of $f_{R_{i_k}}$ and the unconventional constraint~\eqref{ocp:local:first:draining}, which enforces the uncertainty to be drained. In the next section we discuss how the problem can be solved efficiently.

\section{Solving the Draining OCP}\label{sec:solution:local:ocp}
In this section we devote our attention to the local draining \ocpref{ocp:local:first}. We introduce the following results as building blocks to \theoref{theorem:exact:reformulation}, which reformulates \eqref{ocp:local:first} into a smooth \ac{ocp}.

\begin{lemma}\label{lemma:straight:line:out}
    Consider any optimal trajectory of~\ocpref{ocp:local:first} with optimal cost $T_k^\ast$. Further, let $t_0 \in (0, T_k^\ast)$ be any time such that $R_{i_k}^\ast(t_0) = 0$. Then
    \begin{equation}\label{eq:optimal:control:out}
        u^\ast(t) = \frac{s_k^\psi - s^\ast(t_0)}{\|s_k^\psi - s^\ast(t_0)\|}
    \end{equation}
    for all $t \in (t_0, T_k^\ast)$.
\end{lemma}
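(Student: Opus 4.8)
The key observation is that once the uncertainty $R_{i_k}$ has been driven to zero at time $t_0$, the dynamics $f_{R_{i_k}}$ allow it to stay at zero as long as $A_{i_k} - B_{i_k} p_{i_k}(s) < 0$, i.e. as long as the agent remains close enough to the target. But crucially, the constraint~\eqref{ocp:local:first:draining} only requires $\min_{\tau} R_{i_k}(\tau) = 0$; it does not forbid $R_{i_k}$ from increasing again after $t_0$. So on the interval $(t_0, T_k^\ast)$ the uncertainty state has become irrelevant to feasibility — the draining constraint is already satisfied — and the only active requirements are the dynamics $\dot s = u$, the control bound $\|u\|^2 \le 1$, and the terminal condition $s(T_k^\ast) = s_k^\psi$. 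The remaining sub-problem on $(t_0, T_k^\ast)$ is therefore exactly a minimum-time point-to-point problem for a single integrator with unit speed bound.

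First I would make this reduction precise: fix an optimal trajectory and the time $t_0$, and argue by a cut-and-paste / exchange argument. Suppose the restriction of $u^\ast$ to $(t_0, T_k^\ast)$ were not the constant unit vector pointing from $s^\ast(t_0)$ toward $s_k^\psi$. Then I would replace the tail of the trajectory after $t_0$ with the straight-line-at-full-speed motion from $s^\ast(t_0)$ to $s_k^\psi$, which takes time $\|s_k^\psi - s^\ast(t_0)\|$. The new tail is feasible: it satisfies the dynamics and control bound by construction, it matches the state $s^\ast(t_0)$ at the splice point, it ends at $s_k^\psi$, and since the first part of the trajectory on $[0,t_0]$ is untouched we still have $R_{i_k}(t_0) = 0$, so~\eqref{ocp:local:first:draining} holds regardless of what $R_{i_k}$ does afterward. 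Hence the spliced trajectory is feasible for~\ocpref{ocp:local:first}.

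Next I would compare costs. By the triangle inequality applied to the original tail, $T_k^\ast - t_0 = \int_{t_0}^{T_k^\ast}\!\!\dt \ge \int_{t_0}^{T_k^\ast}\!\|u^\ast(t)\|\,\dt \ge \|s^\ast(T_k^\ast) - s^\ast(t_0)\| = \|s_k^\psi - s^\ast(t_0)\|$, with equality only if $\|u^\ast(t)\| = 1$ a.e. and $u^\ast$ points in the fixed direction $s_k^\psi - s^\ast(t_0)$ a.e. on $(t_0, T_k^\ast)$. If the stated form~\eqref{eq:optimal:control:out} fails on a positive-measure set, the inequality is strict, so the spliced trajectory has strictly smaller cost, contradicting optimality of $T_k^\ast$. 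Therefore $u^\ast(t)$ must equal~\eqref{eq:optimal:control:out} for almost every $t \in (t_0,T_k^\ast)$, and hence (after identifying $u^\ast$ with its continuous representative, or stating the conclusion a.e.) everywhere on that interval.

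The main obstacle is a regularity/edge-case subtlety rather than the core argument: one must ensure $s^\ast(t_0) \neq s_k^\psi$ so that the direction in~\eqref{eq:optimal:control:out} is well defined. This follows because $t_0 < T_k^\ast$ together with the above chain of inequalities — if $s^\ast(t_0) = s_k^\psi$ then optimality would force $T_k^\ast = t_0$, contradicting $t_0 \in (0,T_k^\ast)$ — but it should be noted explicitly. A secondary point is measure-theoretic bookkeeping (the equality case of the triangle inequality for $L^1$ controls and the almost-everywhere qualifier), which is routine and can be handled briefly.
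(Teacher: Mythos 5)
Your proposal is correct and follows essentially the same route as the paper's proof: a cut-and-paste argument in which the tail after $t_0$ is replaced by the unit-speed straight-line motion to $s_k^\psi$, feasibility being preserved because the draining constraint~\eqref{ocp:local:first:draining} is already satisfied at $t_0$, and the strict cost reduction contradicting optimality. Your treatment is somewhat more careful than the paper's (the explicit triangle-inequality equality case, the a.e. qualifier, and ruling out $s^\ast(t_0)=s_k^\psi$), but the underlying argument is identical.
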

\begin{proof}
    Let $(s^\ast, R_{i_k}^\ast)$ be any optimal trajectory of~\eqref{ocp:local:first} with optimal control $u^\ast$, and let $t_0$ be a time with $R_{i_k}^\ast(t_0)$. If $u^\ast(t)$ differs from~\eqref{eq:optimal:control:out} over a nonempty open interval $\I \subset [t_0, T_k^\ast]$, then $\|s_k^\psi - s^\ast(t_0)\| < T_k^\ast - t_0$. Further, the control law
    \begin{equation*}
        \tilde{u}(t) = \begin{cases}
            u^\ast(t), & \ift t \leq t_0, \\
            \frac{s_k^\psi - s^\ast(t_0)}{\| s_k^\psi - s^\ast(t_0) \|}, & \othert,
        \end{cases}
    \end{equation*}
    is feasible and leads to a strictly lower cost, which contradicts the optimality assumption of $s^\ast$.
\end{proof}

Although \lemref{lemma:straight:line:out} characterizes the final piece of an optimal trajectory, it does not help identifying an adequate time $t_0$ or position $s^\ast(t_0)$. \theoref{theorem:unique:zero:level:set} will characterize the point $s^\ast(t_0)$. However, we require a few more lemmas, including the next technical statement, which captures the fact that any homotopy of feasible control laws is a feasible control law itself.
\begin{lemma}\label{lemma:homotopy}
    Let $u_1 : [0,T_1] \to \R^2$ and $u_2 : [0, T_2] \to \R^2$ be two feasible control laws for \ocpref{ocp:local:first}, and define $T_\sigma = (1-\sigma)T_1 + \sigma T_2$ for any homotopy parameter $\sigma \in [0,1]$. Then, the control law $u_\sigma : [0, T_\sigma] \to \R^2$ defined by
    \begin{equation*}            
        u_\sigma(t) = (1-\sigma) u_1\left( \frac{t}{T_\sigma} T_1\right) + \sigma u_2 \left( \frac{t}{T_\sigma} T_2\right)
    \end{equation*}
    satisfies $\| u_\sigma(t) \| \leq 1$ and generates a trajectory with $s_\sigma(0) = s_k^\varphi$ and $s_\sigma(T_\sigma) = s_k^\psi$.
\end{lemma}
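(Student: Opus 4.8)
The statement asserts two properties: the pointwise control bound $\|u_\sigma(t)\|\le 1$, and the spatial endpoint conditions $s_\sigma(0)=s_k^\varphi$ and $s_\sigma(T_\sigma)=s_k^\psi$. The plan is to view $u_\sigma$ as the velocity of a spatial homotopy between the two given trajectories after each has been reparametrized onto the common horizon $[0,T_\sigma]$. Writing $s_i$ for the trajectory generated by $u_i$, set $\hat s_i(t):=s_i\!\left(\frac{t}{T_\sigma}T_i\right)$ for $t\in[0,T_\sigma]$. A time rescaling does not change the spatial path, so $\hat s_i(0)=s_i(0)=s_k^\varphi$, $\hat s_i(T_\sigma)=s_i(T_i)=s_k^\psi$, and $\dot{\hat s}_i(t)=\frac{T_i}{T_\sigma}\,u_i\!\left(\frac{t}{T_\sigma}T_i\right)$. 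Then $s_\sigma:=(1-\sigma)\hat s_1+\sigma\hat s_2$ is the candidate trajectory, and its velocity is the homotopy control $u_\sigma$; note that the rescaled velocities carry the factors $T_i/T_\sigma$, which is what makes them compatible with the shared horizon.

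For the control bound, observe that for $t\in[0,T_\sigma]$ we have $t/T_\sigma\in[0,1]$, so each argument $\frac{t}{T_\sigma}T_i$ lies in $[0,T_i]$ and $u_\sigma$ is well defined. Feasibility of $u_1$ and $u_2$ gives $\|u_i(\cdot)\|\le1$, hence by the triangle inequality $\|u_\sigma(t)\|\le(1-\sigma)\frac{T_1}{T_\sigma}+\sigma\frac{T_2}{T_\sigma}=\frac{(1-\sigma)T_1+\sigma T_2}{T_\sigma}=1$ by the definition of $T_\sigma$. For the endpoints, convexity of the boundary constraints does the work: $s_\sigma(0)=(1-\sigma)s_k^\varphi+\sigma s_k^\varphi=s_k^\varphi$ and $s_\sigma(T_\sigma)=(1-\sigma)s_k^\psi+\sigma s_k^\psi=s_k^\psi$. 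Equivalently, $s_\sigma(T_\sigma)=s_k^\varphi+\int_0^{T_\sigma}u_\sigma(\tau)\,d\tau$, and the substitution $v=\frac{\tau}{T_\sigma}T_i$ in each term yields a Jacobian $T_\sigma/T_i$ that turns $\int_0^{T_i}u_i(v)\,dv=s_k^\psi-s_k^\varphi$ into the correct net displacement $s_k^\psi-s_k^\varphi$.

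I do not expect a genuine obstacle here. The only delicate point is the time-rescaling bookkeeping: reparametrizing both trajectories onto the shared horizon $T_\sigma$ is precisely what reconciles, simultaneously, a single terminal time, the common net displacement $s_k^\psi-s_k^\varphi$, and the speed limit $\|u_\sigma\|\le1$; the change-of-variables Jacobian (equivalently, the $T_i/T_\sigma$ weighting of the rescaled velocities) must be carried correctly, or the terminal condition $s_\sigma(T_\sigma)=s_k^\psi$ fails. Note that the uncertainty dynamics $f_{R_{i_k}}$ and the draining constraint are not needed: the lemma claims only admissibility of the control and the spatial endpoint constraints.
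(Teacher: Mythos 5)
Your argument is internally sound, but it is not a proof of the statement as displayed: throughout you work with the reweighted control
\begin{equation*}
u_\sigma(t)=(1-\sigma)\tfrac{T_1}{T_\sigma}\,u_1\!\left(\tfrac{t}{T_\sigma}T_1\right)+\sigma\tfrac{T_2}{T_\sigma}\,u_2\!\left(\tfrac{t}{T_\sigma}T_2\right),
\end{equation*}
which is the velocity of your convex combination $s_\sigma=(1-\sigma)\hat s_1+\sigma\hat s_2$ and is what your bound $(1-\sigma)\tfrac{T_1}{T_\sigma}+\sigma\tfrac{T_2}{T_\sigma}=1$ refers to, whereas the lemma's formula carries no factors $T_i/T_\sigma$. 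For the formula as printed the speed bound is immediate from the triangle inequality, but the endpoint claim genuinely fails when $T_1\neq T_2$: the substitution $v=\tfrac{t}{T_\sigma}T_i$ gives
\begin{equation*}
\int_0^{T_\sigma}u_\sigma(t)\,dt=\Bigl[(1-\sigma)\tfrac{T_\sigma}{T_1}+\sigma\tfrac{T_\sigma}{T_2}\Bigr]\bigl(s_k^\psi-s_k^\varphi\bigr),
\end{equation*}
and the bracket equals $(1-\sigma)^2+\sigma^2+\sigma(1-\sigma)\bigl(\tfrac{T_1}{T_2}+\tfrac{T_2}{T_1}\bigr)>1$ for $\sigma\in(0,1)$ and $T_1\neq T_2$, so the trajectory overshoots $s_k^\psi$ whenever $s_k^\varphi\neq s_k^\psi$. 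In other words, what you have proved is a corrected variant of the lemma, not the literal statement; you acknowledge this only obliquely in your closing remark that the Jacobian "must be carried correctly, or the terminal condition fails," but you never say that the displayed formula is the one missing it.

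For comparison, the paper's own proof is exactly the two steps you outline, compressed into one sentence — triangle inequality for the bound, "integrate $u_\sigma$ from $0$ to $T_\sigma$" for the endpoints — and it glosses over precisely the change-of-variables factor you flag; your bookkeeping is the more careful of the two. The constructive resolution is the one you implicitly adopt: define the homotopy at the level of time-rescaled trajectories (equivalently, include the weights $T_i/T_\sigma$ in $u_\sigma$), which still satisfies $\|u_\sigma\|\leq 1$ and the boundary conditions of \eqref{ocp:local:first}, and which is all that the subsequent application in Lemma~\ref{lemma:0:straight:line} actually uses. You should state explicitly that you are proving the lemma with this amended control law (or note that the statement requires $T_1=T_2$ as written), rather than presenting the reweighted object as if it were the one in the statement.
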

\begin{proof}
    The former statement follows using the triangle inequality and the latter statement by integrating $u_\sigma(t)$ over $t$ from $0$ to $T_\sigma$.
\end{proof}

\begin{lemma}\label{lemma:0:straight:line}
    Let $u^\ast$ be an optimal control of~\eqref{ocp:local:first} and assume that there exists a nonempty open interval $\I = (t_1, t_2)$ such that $R_{i_k}^\ast(t) = 0$ on $\I$. Then
    \begin{equation}\label{eq:optimal:control:in:0}
        u^\ast(t) = \frac{s^\ast(t_2) - s_k^\varphi}{t_2}
    \end{equation}
    on $[0, t_2]$.
\end{lemma}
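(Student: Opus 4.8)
The plan is to show that, under the interval hypothesis, the draining constraint becomes so slack that the time‑optimal draining trajectory coincides with the \emph{draining‑free} time‑optimal trajectory, namely the unit‑speed straight run from $s_k^\varphi$ to $s_k^\psi$. Concretely, it suffices to prove $T_k^\ast = \|s_k^\psi - s_k^\varphi\|$: any feasible trajectory has path length at least $\|s_k^\psi - s_k^\varphi\|$ and speed at most $1$, so if its duration equals $\|s_k^\psi - s_k^\varphi\|$ it must be the constant‑control unit‑speed segment, whence $u^\ast(t) = (s_k^\psi - s_k^\varphi)/T_k^\ast$, and since $s^\ast(t_2) = s_k^\varphi + t_2\,u^\ast$, also $u^\ast(t) = (s^\ast(t_2) - s_k^\varphi)/t_2$ on $[0,t_2]$. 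Since $T_k^\ast \ge \|s_k^\psi - s_k^\varphi\|$ always holds, I assume for contradiction that the inequality is strict.

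Under that assumption, let $\bar u$ be the control traversing $[s_k^\varphi, s_k^\psi]$ at unit speed, of duration $T_L := \|s_k^\psi - s_k^\varphi\| < T_k^\ast$; it satisfies $\|\bar u\| \le 1$ and $\bar s(0) = s_k^\varphi$, $\bar s(T_L) = s_k^\psi$, though it need not drain the target — irrelevant below, since the proof of \lemref{lemma:homotopy} uses only the speed bound and the two endpoint conditions, which $\bar u$ does satisfy. Taking $u_1 = u^\ast$ and $u_2 = \bar u$ in \lemref{lemma:homotopy} yields, for every $\sigma \in (0,1]$, a control $u_\sigma$ of duration $T_\sigma = (1-\sigma)T_k^\ast + \sigma T_L < T_k^\ast$ that obeys the speed bound and the boundary conditions. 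The whole argument then reduces to showing that $u_\sigma$ also satisfies the draining constraint~\eqref{ocp:local:first:draining} for all sufficiently small $\sigma > 0$: such a $u_\sigma$ is feasible for~\ocpref{ocp:local:first} with strictly smaller cost than $u^\ast$, a contradiction, and therefore $T_k^\ast = T_L$.

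To see that draining survives the homotopy, I use the interval to isolate a robust ``draining window.'' By \lemref{lemma:straight:line:out} applied at any $t_0 \in (t_1,t_2)$ (where $R_{i_k}^\ast(t_0) = 0$), $s^\ast$ restricted to $[t_1,t_2]$ is a unit‑speed straight segment; moreover $R_{i_k}^\ast \equiv 0$ on $(t_1,t_2)$ forces $A_{i_k} - B_{i_k} p_{i_k}(s^\ast(t)) \le 0$ there, so this segment lies in the closed disk $D := \{\, s : p_{i_k}(s) \ge A_{i_k}/B_{i_k} \,\}$. Being a nondegenerate chord of $D$, its relative interior lies in the open disk, so by compactness there exist an interval $J \subset (t_1,t_2)$ and a constant $c > 0$ with $p_{i_k}(s^\ast(t)) \ge A_{i_k}/B_{i_k} + c$ for $t \in J$. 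Since the trajectory and the (reflected) $R$‑dynamics depend continuously on the control, $s_\sigma \to s^\ast$ and $R_{i_k}^\sigma \to R_{i_k}^\ast$ uniformly as $\sigma \to 0$; hence for small $\sigma$, on the appropriately rescaled window $J_\sigma \subset (0,T_\sigma)$ one still has $p_{i_k}(s_\sigma(t)) \ge A_{i_k}/B_{i_k} + c/2$, while $R_{i_k}^\sigma$ is at most some $\varepsilon$ (with $\varepsilon \to 0$) at the left end of $J_\sigma$ because $R_{i_k}^\ast$ vanishes there. Whenever $R_{i_k}^\sigma > 0$ on $J_\sigma$ we then have $\dot R_{i_k}^\sigma \le -B_{i_k} c/2 < 0$, so $R_{i_k}^\sigma$ would decrease across $J_\sigma$ by an amount bounded below independently of $\sigma$; choosing $\sigma$ small enough that this exceeds $\varepsilon$ forces $R_{i_k}^\sigma$ through $0$ inside $J_\sigma$, i.e.\ $\min_\tau R_{i_k}^\sigma(\tau) = 0$. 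This propagation of the draining constraint through the homotopy — made possible by the strict interiority of the draining window — is the main obstacle; the remaining bookkeeping (uniform continuity of the rescaled homotopy, the length bound $|J_\sigma| \ge |J|/2$) is routine. With the contradiction in hand, $T_k^\ast = \|s_k^\psi - s_k^\varphi\|$, and the claimed identity follows as in the first paragraph.
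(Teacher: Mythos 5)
Your proposal is correct, but it follows a genuinely different route from the paper's. The paper works at the level of the lemma itself: assuming $u^\ast$ deviates from \eqref{eq:optimal:control:in:0}, it homotopes (via \lemref{lemma:homotopy}) toward a two-piece comparison control running straight from $s_k^\varphi$ to $s^\ast(t_2)$ in time $t_2$ and then straight to $s_k^\psi$, and concludes by asserting, from continuity of the uncertainty in the homotopy parameter, that the perturbed control still drains while having strictly smaller cost. You instead prove the stronger conclusion of \corref{corollary:0:straight:line} directly: homotope toward the direct unit-speed run from $s_k^\varphi$ to $s_k^\psi$, show the draining constraint survives small $\sigma$ via a quantitative strict-interior window (the open chord of the inner disk supplied by \lemref{lemma:straight:line:out} and strict convexity), deduce $T_k^\ast=\|s_k^\psi-s_k^\varphi\|$, and recover \eqref{eq:optimal:control:in:0} from the equality case of the triangle inequality. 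Two points of comparison: your choice of comparison control is what actually yields a strict time decrease, since under the hypothesis \lemref{lemma:straight:line:out} forces $T_k^\ast=t_2+\|s_k^\psi-s^\ast(t_2)\|$, so the paper's $\tilde T$ equals $T_k^\ast$ and its claimed $T_\varepsilon<T_k^\ast$ does not follow as written, whereas yours comes straight from the contradiction hypothesis; and your window argument makes the feasibility-propagation step explicit where the paper only asserts a limit, at the price of invoking (correctly, and flagged as routine) the Lipschitz continuity of the reflected uncertainty dynamics with respect to the control, plus the degenerate case $s_k^\varphi=s_k^\psi$, where \lemref{lemma:homotopy} must be bypassed and your argument simply shows the hypothesis is vacuous.
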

\begin{proof}
    Assume that $u^\ast$, $R_{i_k}^\ast$ and $\I$ are as above and let $T_k^\ast$ be the associated optimal cost. Further, assume that $u^\ast$ differs from~\eqref{eq:optimal:control:in:0} over a nonempty open interval. Define $\tilde{T} = t_2 + \| s_k^\psi - s^\ast(t_2) \|$ and consider the control policy 
    \begin{equation*}
        \tilde{u}(t) = \begin{cases}
            \frac{s^\ast(t_2) - s_k^\varphi}{t_2}, & \ift 0 \leq t < t_2, \\
            \frac{s_k^\psi - s^\ast(t_2)}{\tilde{T} - t_2}, & \ift t_2 \leq t \leq \tilde{T}_k,
        \end{cases}
    \end{equation*}
    which satisfies the boundary conditions but may not drain the uncertainty. Now let $\sigma \in (0,1)$ be a homotopy parameter. Then
    \begin{equation}
        u_\sigma(t) = (1 - \sigma)u^\ast(t) + \sigma\tilde{u}(t)
    \end{equation}
    satisfies the boundary conditions of~\eqref{ocp:local:first} due to \lemref{lemma:homotopy}. Furthermore, $u_\sigma$ satisfies
    \begin{equation*}
        \lim_{\sigma \to 0} u_\sigma(t) = u^\ast(t),
    \end{equation*}
    showing that $u_\sigma$ converges to a feasible control law. Now let $R_{i_k}^\sigma$ denote the uncertainty trajectory under the control $u_\sigma$. Then 
    \begin{equation}
        \lim_{\sigma \to 0} R_{i_k}^\sigma(t) = R_{i_k}^\ast(t).
    \end{equation}  
    Due to continuity of $R_{i_k}^\ast$ together with the fact that $R_{i_k}^\ast(t) = 0$ on $\I$ imply that there exists an $\varepsilon > 0$ such that $R_{i_k}^\varepsilon(t) = 0$ for some $t \in [0, T_\varepsilon]$, where $T_\varepsilon = (1-\varepsilon) T_k^\ast + \varepsilon \tilde{T}$. But $T_\varepsilon < T_k^\ast$ with $u_\varepsilon$ feasible contradict the optimality assumption of $u^\ast$.
\end{proof}

\begin{corollary}\label{corollary:0:straight:line}
    Let $u^\ast$ be an optimal control of~\eqref{ocp:local:first} and assume that there exists a nonempty open interval $\I = (t_1, t_2)$ such that $R_{i_k}^\ast(t) = 0$ on $\I$. Then $T_k^\ast = \| s_k^\psi - s_k^\varphi \|$ and 
    \begin{equation}
        u^\ast(t) = \frac{s_k^\psi - s_k^\varphi}{T_k^\ast}
    \end{equation}
    for all $t \in [0, T_k^\ast]$.
\end{corollary}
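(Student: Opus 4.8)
The plan is to combine \lemref{lemma:0:straight:line} and \lemref{lemma:straight:line:out}: the former pins down the initial part of the optimal trajectory as a straight line from $s_k^\varphi$ to $s^\ast(t_2)$, the latter pins down a terminal part as a unit-speed straight line towards $s_k^\psi$, and on their overlap the two velocity vectors must coincide, forcing the whole trajectory to be a single unit-speed straight segment from $s_k^\varphi$ to $s_k^\psi$.

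Concretely, I would proceed in three steps. First, since by hypothesis $R_{i_k}^\ast \equiv 0$ on the open interval $\I=(t_1,t_2)$, \lemref{lemma:0:straight:line} applies and gives that on $[0,t_2]$ the optimal control equals the constant vector $v_\varphi := (s^\ast(t_2)-s_k^\varphi)/t_2$. Second, fix any $t_0\in(t_1,t_2)$. Because all times of \ocpref{ocp:local:first} lie in $[0,T_k^\ast]$, we have $0\le t_1<t_0<t_2\le T_k^\ast$, so $t_0\in(0,T_k^\ast)$ and $R_{i_k}^\ast(t_0)=0$. Moreover $s^\ast(t_0)\neq s_k^\psi$: otherwise the restriction of the optimal trajectory to $[0,t_0]$ would itself be feasible for \ocpref{ocp:local:first} (it obeys the dynamics and the bound on $u$, has $s(0)=s_k^\varphi$ and $s(t_0)=s_k^\psi$, and satisfies the draining constraint since $R_{i_k}^\ast(t_0)=0$) with cost $t_0<T_k^\ast$, contradicting optimality. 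Hence \lemref{lemma:straight:line:out} applies at $t_0$ and yields $u^\ast(t)=v_\psi := (s_k^\psi-s^\ast(t_0))/\|s_k^\psi-s^\ast(t_0)\|$, a unit vector, for all $t\in(t_0,T_k^\ast)$.

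Third, I would exploit the overlap. The interval $(t_0,t_2)$ is nonempty and contained in both $[0,t_2]$ and $(t_0,T_k^\ast)$, so there $v_\varphi=u^\ast(t)=v_\psi$; in particular $\|v_\varphi\|=1$, and $u^\ast$ equals the single constant unit vector $v_\varphi$ on all of $[0,t_2]\cup(t_0,T_k^\ast)$, i.e.\ on $[0,T_k^\ast)$. Integrating $\dot s^\ast=u^\ast$ from $0$ to $T_k^\ast$ and using $s^\ast(0)=s_k^\varphi$, $s^\ast(T_k^\ast)=s_k^\psi$ gives $s_k^\psi-s_k^\varphi=T_k^\ast v_\varphi$; taking norms and using $\|v_\varphi\|=1$ yields $T_k^\ast=\|s_k^\psi-s_k^\varphi\|$ and hence $u^\ast(t)=v_\varphi=(s_k^\psi-s_k^\varphi)/T_k^\ast$, which is the claim.

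I do not expect a substantial obstacle, as the real work is already contained in \lemref{lemma:0:straight:line} and \lemref{lemma:straight:line:out}; the corollary is essentially a patching argument. The only points requiring care are (i) checking that $t_0$ can be chosen simultaneously in the validity ranges of both lemmas, which is immediate from $0\le t_1<t_2\le T_k^\ast$, and (ii) ruling out the degenerate case $s^\ast(t_0)=s_k^\psi$, which is handled by the restriction-to-$[0,t_0]$ argument above. Note that the boundary case $t_2=T_k^\ast$ needs no separate treatment, since the overlap interval $(t_0,t_2)=(t_0,T_k^\ast)$ is still nonempty.
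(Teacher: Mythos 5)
Your proposal is correct and follows essentially the same route as the paper: invoke \lemref{lemma:0:straight:line} for constancy on $[0,t_2]$ and \lemref{lemma:straight:line:out} for constancy on the tail, then identify the two constants on the overlapping interval and integrate to obtain $T_k^\ast = \|s_k^\psi - s_k^\varphi\|$. Your write-up is somewhat more careful than the paper's (explicitly ruling out the degenerate case $s^\ast(t_0)=s_k^\psi$ and spelling out the integration), but the underlying argument is the same.
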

\begin{proof}
    \lemref{lemma:0:straight:line} shows that $u^\ast(t)$ is constant on $[0, t_2]$ while~\lemref{lemma:straight:line:out} shows that $u^\ast(t)$ is constant on $[t_1, T_k^\ast]$. Note that $t_1 < t_2$ which implies that the two controls must be identical. The claim follows.
\end{proof}

\begin{figure}
    \centering
    \includegraphics[width=0.5\linewidth]{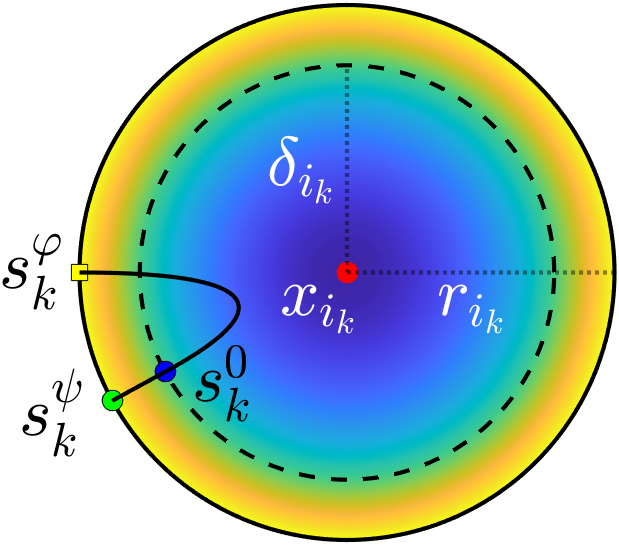}
    \caption{Illustrating a solution of the draining \ac{ocp}~\eqref{ocp:local:first} of target $x_{i_k}$ with entrance point $s_k^\varphi$, outer departure point $s_k^\psi$ and inner departure point $s_k^0$ (cf.~\eqref{ocp:local}). The function $f_{R_{i_k}}$ takes positive values when the agent-target distance exceeds $\delta_{i_k}$, whereas it is negative in the interior of the inner circle (dashed).}
    \label{fig:uncertainty}
\end{figure}

We now utilize \lemref{lemma:straight:line:out} in order to characterize a point on the agent's trajectory which allows the verification of the draining condition~\eqref{ocp:local:first:draining}. Uniqueness will follow from \corref{corollary:0:straight:line}.
\begin{theorem}\label{theorem:unique:zero:level:set}
    For any optimal trajectory of~\eqref{ocp:local:first} there exists a unique time $t_k^0$ such that $\| \agent^\ast(t_k^0) - x_{i_k} \| = \delta_{i_k}$ and $R_{i_k}^\ast(t_k^0) = 0$. Further, it holds 
    \begin{equation}\label{eq:optimal:control:out:2}
        u^\ast(t) = \frac{s_k^\psi - \agent^\ast(t_k^0)}{\| s_k^\psi - \agent^\ast(t_k^0) \|}
    \end{equation}
    for every $t \in [t_k^0, T_k^\ast]$.
\end{theorem}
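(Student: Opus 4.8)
The plan is to take $t_k^0$ to be the \emph{last} time at which the optimal uncertainty vanishes and to show that at this instant the agent must lie exactly on the inner circle $\{y\in\R^2:\|y-x_{i_k}\|=\delta_{i_k}\}$, after which \eqref{eq:optimal:control:out:2} is precisely the conclusion of \lemref{lemma:straight:line:out}. Throughout I would use that $\delta_{i_k}$ is the radius at which $f_{R_{i_k}}$ changes sign, i.e., $A_{i_k}-B_{i_k}p_{i_k}(s)<0\iff\|s-x_{i_k}\|<\delta_{i_k}$; that $R_{i_k}^\ast\ge 0$ and $\check{R}_k>0$; and that the entrance and departure points lie outside the inner disk, $\|s_k^\varphi-x_{i_k}\|=\|s_k^\psi-x_{i_k}\|=r_{i_k}>\delta_{i_k}$. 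I would first record two elementary facts. (i) By the draining constraint \eqref{ocp:local:first:draining}, $R_{i_k}^\ast$ attains the value $0$; since $R_{i_k}^\ast(0)=\check{R}_k>0$ and, because $s_k^\psi$ lies outside the inner disk (so $\dot R_{i_k}^\ast>0$ near $T_k^\ast$), also $R_{i_k}^\ast(T_k^\ast)>0$, the zero set $Z=\{t:R_{i_k}^\ast(t)=0\}$ is a nonempty compact subset of $(0,T_k^\ast)$. (ii) At every $t\in Z$ the agent lies in the closed inner disk: if $\|s^\ast(t)-x_{i_k}\|>\delta_{i_k}$, then $\dot R_{i_k}^\ast=A_{i_k}-B_{i_k}p_{i_k}(s^\ast)>0$ on a neighbourhood of $t$, so $R_{i_k}^\ast$ would be strictly increasing there and hence negative just before $t$, contradicting $R_{i_k}^\ast\ge 0$.

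For existence I would set $t_k^0=\max Z$, which exists by (i), and apply \lemref{lemma:straight:line:out} at $t_k^0\in(0,T_k^\ast)$; this already yields \eqref{eq:optimal:control:out:2} and, in particular, that $s^\ast$ traverses the segment toward $s_k^\psi$ at unit speed on $[t_k^0,T_k^\ast]$. Combined with (ii), it then suffices to exclude $\|s^\ast(t_k^0)-x_{i_k}\|<\delta_{i_k}$. If the agent were strictly inside the inner disk at $t_k^0$, it would remain strictly inside on some interval $[t_k^0,t_k^0+\eta)$; there the clamping branch of $f_{R_{i_k}}$ is active whenever the uncertainty equals $0$, so $R_{i_k}^\ast$ would stay at $0$ on that interval, contradicting $t_k^0=\max Z$. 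Hence $\|s^\ast(t_k^0)-x_{i_k}\|=\delta_{i_k}$.

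For uniqueness I would argue by contradiction: suppose $t_1<t_2$ both satisfy the two conditions. By \lemref{lemma:straight:line:out} at $t_1$, $s^\ast$ is affine and travelled at unit speed on $[t_1,T_k^\ast]$, hence injective in $t$ there; thus $s^\ast(t_1)\ne s^\ast(t_2)$ are two distinct points lying both on this line and on the circle of radius $\delta_{i_k}$. Since a line meets that circle in at most two points, these are exactly those two, and the chord $s^\ast((t_1,t_2))$ lies strictly inside the inner disk; therefore $\dot R_{i_k}^\ast\le 0$ on $(t_1,t_2)$, which with $R_{i_k}^\ast(t_1)=0$ and $R_{i_k}^\ast\ge 0$ forces $R_{i_k}^\ast\equiv 0$ on a nonempty open interval. \corref{corollary:0:straight:line} then forces the \emph{entire} optimal trajectory to be the straight segment from $s_k^\varphi$ to $s_k^\psi$. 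But $s_k^\varphi$ lies outside the inner disk and $t_1$ is the first time that segment meets the inner circle, so the agent is strictly outside the closed inner disk on $[0,t_1)$, whence $\dot R_{i_k}^\ast>0$ there and $R_{i_k}^\ast(t_1)\ge R_{i_k}^\ast(0)=\check{R}_k>0$, contradicting $R_{i_k}^\ast(t_1)=0$. Hence $t_1=t_2$.

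The main obstacle is the existence half, namely pinning $R_{i_k}^\ast$ to $0$ precisely at the last crossing of the inner circle rather than at some point strictly inside it. This draws together \lemref{lemma:straight:line:out} (the post-crossing motion is a straight segment), the convexity of the inner disk (so that once outside the agent never re-enters, which makes ``last crossing'' well behaved), and the non-negativity, one-sided sign, and clamping of $R_{i_k}^\ast$. The uniqueness half is comparatively routine once \corref{corollary:0:straight:line} is available, provided one also rules out the degenerate coincidence $s^\ast(t_1)=s^\ast(t_2)$, which the unit-speed straight-line motion excludes.
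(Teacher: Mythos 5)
Your proposal is correct and takes essentially the same route as the paper: existence of the zero-uncertainty crossing of the inner circle via Lemma~\ref{lemma:straight:line:out} together with the clamping of the dynamics inside the inner disk, and uniqueness by showing that a second such point would force $R_{i_k}^\ast \equiv 0$ on an open interval, invoking Corollary~\ref{corollary:0:straight:line}, and deriving a contradiction at the first intersection of the resulting straight-line trajectory with the inner circle. Your choice of $t_k^0$ as the last zero of $R_{i_k}^\ast$ simply makes precise the paper's informal step that the agent stays inside the inner circle with uncertainty held at zero until the straight-line exit path crosses the circle.
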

\begin{proof}
    Consider \figref{fig:uncertainty} and note that in order to drive the target's uncertainty to zero, i.e., satisfy~\eqref{ocp:local:first:draining}, it is necessary for the agent to enter the inner circle of radius $\delta_{i_k} = \sqrt{r_{i_k}^2(B_{i_k}-A_{i_k})/B_{i_k}}$ and remain in its interior until the uncertainty is drained at some time $t_0$. Applying \lemref{lemma:straight:line:out} implies~\eqref{eq:optimal:control:out:2}. The resulting trajectory from $s^\ast(t_0)$ to $s^\ast(T_k)$ forms a straight line, and thus intersects the zero level set of the uncertainty dynamics at a time $t_k^0 \in [t_0, T_k)$. Note that $R_{i_k}^\ast(t_k^0) = 0$ by construction. 
    
    Now assume the point were not unique. Then, necessarily $t_0 < t_k^0$ and $\| s^\ast(t_0) - x_{i_k} \| = \delta_{i_k}$. The straight line trajectory from $s^\ast(t_0)$ to $s^\ast(t_k^0)$ would lie in the interior of the inner circle and consequently $R_{i_k}(t) = 0$ for all $t \in (t_0, t_k^0)$. Applying \corref{corollary:0:straight:line} would imply that the entire trajectory forms a straight line. This line would intersect the inner circle twice. But on the first such intersection it is impossible for the uncertainty to vanish, since its dynamics were nonnegative up to that point. This contradicts $R_{i_k}(t_0) = 0$, implying uniqueness.
\end{proof}

A special case of the optimal control occurs whenever the initial uncertainty $\check{R}_{i_k}$ is large enough such that the greedy control policy~\eqref{eq:greedy:control} becomes optimal. This result is provided in \propref{proposition:greedy:optimal}. Let us first prove a lemma for this statement. In the following statements we drop the indices of $i_k$ from the variables $A,B,r$ and $\delta$ in order to increase readability.

\begin{lemma}\label{lemma:greedy:control}
    If
    \begin{equation}
        \check{R}_{i_k} \geq -\left(A - \tfrac{2}{3}B\right)r - \delta(A - B) - \tfrac{\delta^3}{3 r^3}B
    \end{equation}
    then the optimal cost $T_k^\ast$ of~\eqref{ocp:local:first} is bounded below by $2r$.
\end{lemma}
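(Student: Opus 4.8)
The plan is to argue by contraposition: assuming the optimal draining time satisfies $T_k^\ast < 2r$, I will show that $\check R_{i_k}$ must lie strictly below the stated threshold. Throughout I use that the entrance and departure points lie on the boundary of the sensing region, so $\|s_k^\varphi - x_{i_k}\| = \|s_k^\psi - x_{i_k}\| = r$, and I abbreviate the agent--target distance along the optimal trajectory by $d(t) := \|s^\ast(t) - x_{i_k}\|$, which is $1$-Lipschitz since $\|u^\ast\| \le 1$. The instantaneous rate at which $R_{i_k}$ decreases, $B p_{i_k}(s^\ast(t)) - A$, depends on time only through $d(t)$; calling it $\gamma(d(t))$, the map $w \mapsto \gamma(w)$ is continuous and non-increasing, strictly positive on $[0,\delta)$ and zero at $w = \delta$.

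First I would extract the first time $t^\ast \in (0, T_k^\ast]$ with $R_{i_k}^\ast(t^\ast) = 0$; it exists because of the draining constraint~\eqref{ocp:local:first:draining}, and $t^\ast > 0$ because the threshold, hence $\check R_{i_k}$, is positive. On $[0, t^\ast)$ the uncertainty is strictly positive, so the hybrid dynamics reduce to $\dot R_{i_k}^\ast = A - B p_{i_k}(s^\ast)$ with no clamping, and integrating up to $t^\ast$ yields the exact budget identity $\int_0^{t^\ast} \gamma(d(\tau))\,d\tau = \check R_{i_k}$. A short sign argument then gives $d(t^\ast) \le \delta$: were $d(t^\ast) > \delta$, the rate $\gamma(d)$ would be negative on a left neighbourhood of $t^\ast$, so $R_{i_k}^\ast$ would be increasing there, contradicting $R_{i_k}^\ast(t^\ast) = 0$ with $R_{i_k}^\ast > 0$ just before.

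Next I would record the geometric inequality $T_k^\ast \ge t^\ast + r - d(t^\ast)$: after $t^\ast$ the agent must still reach $s_k^\psi$, which by the reverse triangle inequality lies at distance at least $r - d(t^\ast)$ from $s^\ast(t^\ast)$, and covering that distance costs at least that much time. Together with $T_k^\ast < 2r$ this forces $t^\ast < r + d(t^\ast)$. Now for the main estimate: since $\gamma$ is non-increasing and $d$ is $1$-Lipschitz with $d(0) = r$ and $d(t^\ast) = a \le \delta$, the profile $d$ dominates pointwise the ``V''-shaped lower envelope $\underline d(\tau) := \max\{\, r - \tau,\ a - (t^\ast - \tau) \,\}$, whose vertex value $m := (r + a - t^\ast)/2$ is strictly positive by the previous step and satisfies $m \le a \le \delta$. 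Replacing $d$ by $\underline d$ can only raise $\int_0^{t^\ast}\gamma(d(\tau))\,d\tau$, and the substitution $w = \underline d(\tau)$ on each arm of the V turns that integral into $\int_m^r \gamma(w)\,dw + \int_m^a \gamma(w)\,dw$. Subtracting this from $\int_0^r \gamma(w)\,dw + \int_0^\delta \gamma(w)\,dw$ --- which a direct evaluation of the two elementary integrals identifies with the right-hand side of the statement --- leaves exactly $2\int_0^m \gamma(w)\,dw + \int_a^\delta \gamma(w)\,dw$, and both terms are nonnegative with the first strictly positive because $0 < m \le \delta$ and $\gamma > 0$ on $(0,\delta)$. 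Hence $\check R_{i_k} = \int_0^{t^\ast}\gamma(d(\tau))\,d\tau$ is strictly less than the threshold, the contradiction we sought.

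I expect two points to need care. The first is the bookkeeping for the non-smooth dynamics of $R_{i_k}$: working with the \emph{first} zero crossing $t^\ast$ is exactly what makes the budget identity an equality and removes any clamping on $[0, t^\ast)$. The second, and the conceptual heart of the argument, is recognizing that for fixed $t^\ast$ and fixed endpoints the extremal distance profile is the V --- equivalently, that the stated threshold is precisely the uncertainty drainable along a straight diameter traversal of the sensing disk in time $2r$; once that is seen, identifying the threshold with $\int_0^r\gamma + \int_0^\delta\gamma$, and thence with the three terms in the statement, is a routine integration. It is worth noting that the lemma is only non-vacuous when this threshold is positive, which is implicit in the ``$\check R_{i_k}$ large enough'' regime it addresses.
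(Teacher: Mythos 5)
Your proof is correct, and it reaches the contrapositive by a genuinely different route than the paper. The paper's own argument invokes the unique inner exit time $t_k^0$ from \theoref{theorem:unique:zero:level:set} and compares the optimal trajectory against the (infeasible, when $T_k^\ast<2r$) greedy reference path~\eqref{eq:greedy:control}, using the pointwise domination $\|\tilde{s}(t)-x_{i_k}\|\leq\|s^\ast(t)-x_{i_k}\|$ and then enlarging the integration interval to $[0,r+\delta]$. You instead work with the first draining time $t^\ast$, the exact budget identity $\int_0^{t^\ast}\gamma(d(\tau))\,d\tau=\check{R}_{i_k}$, and the fact that the $1$-Lipschitz distance profile is bounded below by the V-shaped envelope, so that monotonicity of $\gamma$ reduces the drained amount to $\int_m^r\gamma+\int_m^a\gamma$; comparing with $\int_0^r\gamma+\int_0^\delta\gamma$ quantifies the slack exactly as $2\int_0^m\gamma+\int_a^\delta\gamma>0$. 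This buys self-containedness (no reliance on \theoref{theorem:unique:zero:level:set}, nor on properties of an infeasible comparison trajectory) and an explicit account of where strictness comes from, at the cost of a bit more bookkeeping; the paper's version is shorter given its earlier machinery and introduces the greedy policy that is reused in \propref{proposition:greedy:optimal}. Two small remarks. First, your identification of the threshold with $\int_0^r\gamma+\int_0^\delta\gamma$ produces the term $\tfrac{\delta^3}{3r^2}B$ rather than the printed $\tfrac{\delta^3}{3r^3}B$; direct integration along the greedy path gives the same $r^2$ exponent, so this appears to be a discrepancy in the printed formula rather than in your computation, but it is worth stating which expression you are proving. Second, in a proof by contraposition you may not use the hypothesis $\check{R}_{i_k}\geq$ threshold to conclude $t^\ast>0$; either recast the argument as a contradiction (assume both the hypothesis and $T_k^\ast<2r$), or treat $\check{R}_{i_k}=0$ separately, where the desired strict inequality is immediate in the positive-threshold regime that you correctly flag as the one the lemma addresses.
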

\begin{proof}
    We prove this statement via contraposition. Assume there exists an optimal trajectory $s^\ast$ with cost $T_k^\ast < 2r$. Let $t_k^0$ be the unique inner exit time as to~\propref{theorem:unique:zero:level:set}. It immediately follows that $t_k^0 < 2r - (r-\delta) = r + \delta$. Now introduce the greedy control law 
    \begin{equation}\label{eq:greedy:control}
        \tilde{u}(t) = \begin{cases}
            \frac{x_{i_k} - s_k^\varphi}{r}, &\ift 0 \leq t < r, \\
            0, &\ift r \leq t < T_k^\ast-r, \\
            \frac{s_k^\psi - x_{i_k}}{r}, &\ift T_k^\ast -r \leq t \leq T_k^\ast.
        \end{cases}
    \end{equation}
    This control law leads to an infeasible trajectory $\tilde{s}(t)$ if $T_k^\ast < 2r$, as it does not reach the departure point $s_k^\psi$. However, $\tilde{s}(t)$ satisfies $\| \tilde{s}(t)  - x_{i_k} \| \leq \| s^\ast(t) - x_{i_k} \|$ for all $t \in [0, T_k^\ast]$ and thus provides
    \begin{equation*}
        \begin{split}
            R_{i_k}^\ast(t_k^0) &= \check{R}_{i_k} + \int_0^{t_k^0} \dot{R}_{i_k}^\ast(t) dt 
            \geq \check{R}_{i_k} + \int_0^{t_k^0} \dot{\tilde{R}}_{i_k}(t) dt \\
            &> \check{R}_{i_k} + \int_0^{r+\delta} \dot{\tilde{R}}_{i_k}(t) dt \\
            &= \check{R}_{i_k} + \left(A - \tfrac{2}{3}B\right)r + (A - B)\delta + \tfrac{\delta^3}{3 r^3}B.
        \end{split}
    \end{equation*}
    Knowing $R_{i_k}^\ast(t_k^0) = 0$ concludes the proof.
\end{proof}

\begin{proposition}\label{proposition:greedy:optimal}
    If
    \begin{equation}\label{proposition:greedy:optimal:bound}
        \check{R}_{i_k} \geq -\left(A - \tfrac{2}{3}B\right)r - \delta(A - B) - \tfrac{\delta^3}{3 r^3}B
    \end{equation}
    then the greedy control policy~\eqref{eq:greedy:control} is an optimal control of~\eqref{ocp:local:first}, with optimal cost 
    \begin{equation*}
        T_k^\ast = \frac{-\check{R}_{i_k} - \bigl(A - \tfrac{2}{3}B\bigr)r - \delta(A - B) - \tfrac{\delta^3}{3 r^3}B}{A-B} + 2r.
    \end{equation*}
\end{proposition}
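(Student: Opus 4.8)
The plan is to prove the claim in two halves: (i) the greedy control~\eqref{eq:greedy:control}, run over a horizon equal to the value $T_k^\ast$ asserted in the statement, generates a \emph{feasible} trajectory of~\ocpref{ocp:local:first}; and (ii) no feasible trajectory has cost below $T_k^\ast$. Both halves rest on a comparison principle: among all trajectories of a \emph{fixed} duration $T$ satisfying $s(0) = s_k^\varphi$ and $s(T) = s_k^\psi$, the greedy trajectory is pointwise the closest to $x_{i_k}$, and since $A - Bp_{i_k}$ is increasing in the agent--target distance it therefore drains $R_{i_k}$ the fastest. The duration-$T$ greedy trajectory is only meaningful once $T \ge 2r$ (otherwise its first and last segments overlap), but under the hypothesis~\eqref{proposition:greedy:optimal:bound}, \lemref{lemma:greedy:control} already forces every feasible trajectory to have $T \ge 2r$, so this restriction costs nothing.

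For half (i), I would first note that the entrance and departure points lie on the sensing boundary, $\|s_k^\varphi - x_{i_k}\| = \|s_k^\psi - x_{i_k}\| = r$, so the three segments of~\eqref{eq:greedy:control} have unit speed: the agent reaches $x_{i_k}$ at time $r$, idles there until time $T_k^\ast - r$, and reaches $s_k^\psi$ at time $T_k^\ast$, which is consistent exactly when $T_k^\ast \ge 2r$ --- and this is precisely~\eqref{proposition:greedy:optimal:bound} rearranged, recalling $A - B < 0$. Along this trajectory the agent--target distance is below $\delta$ exactly on $(r - \delta,\, T_k^\ast - r + \delta)$, so the unsaturated rate $A - Bp_{i_k}$ is negative exactly there; hence the uncertainty rises, then falls, then rises again, and its minimum over $[0, T_k^\ast]$ is attained at $t = T_k^\ast - r + \delta$. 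Evaluating $\check{R}_{i_k} + \int_0^{T_k^\ast - r + \delta}\bigl(A - Bp_{i_k}(s(t))\bigr)\,dt$ by splitting into the in-bound, idle, and out-bound phases (elementary integrals of $A - B + B d(t)^2/r^2$) shows this equals zero precisely when $T_k^\ast$ has the asserted form; thus~\eqref{ocp:local:first:draining} holds and the greedy trajectory is feasible.

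For half (ii), let $s$ be any feasible trajectory of cost $T$. By \lemref{lemma:greedy:control}, $T \ge 2r$, so the greedy trajectory $\tilde{s}$ of duration $T$ is defined. Applying the triangle inequality on $[0,r]$ (via $s(0) = s_k^\varphi$) and on $[T-r,T]$ (via $s(T) = s_k^\psi$), and using $\tilde{s} \equiv x_{i_k}$ on $[r, T-r]$, gives $\|\tilde{s}(t) - x_{i_k}\| \le \|s(t) - x_{i_k}\|$ for all $t$, hence $A - Bp_{i_k}(\tilde{s}(t)) \le A - Bp_{i_k}(s(t))$ pointwise. I would then establish the auxiliary fact that a trajectory drains $R_{i_k}$ if and only if the running integral $t \mapsto \check{R}_{i_k} + \int_0^t \bigl(A - Bp_{i_k}(s(\tau))\bigr)\,d\tau$ takes a nonpositive value somewhere on $[0,T]$; given this, the pointwise inequality above passes to the running integrals and shows that $\tilde{s}$ drains as well. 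But by the computation of half (i), a duration-$T$ greedy trajectory drains only when $T \ge T_k^\ast$. Therefore $T \ge T_k^\ast$, and since the greedy trajectory with $T = T_k^\ast$ is feasible, the greedy policy is optimal with cost $T_k^\ast$.

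I expect the main obstacle to be the non-smoothness of $f_{R_{i_k}}$: comparing uncertainty trajectories through agent--target distances is immediate for the unsaturated rate $A - Bp_{i_k}$, but one must check the comparison survives the floor at zero. This is exactly the content of the auxiliary equivalence used in half (ii); its ``only if'' direction follows because the saturated dynamics pointwise dominate the unsaturated rate, and its ``if'' direction from a first-hitting-time argument combined with continuity of $R_{i_k}$. The rest is the phase-wise integral bookkeeping of half (i) and the appeal to \lemref{lemma:greedy:control} to dispose of the edge case $T < 2r$.
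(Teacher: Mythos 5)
Your proposal is correct and follows essentially the same route as the paper's proof: it combines \lemref{lemma:greedy:control} (to rule out horizons below $2r$), the pointwise distance-comparison argument showing the greedy trajectory accumulates the least uncertainty among all trajectories with the same boundary points and duration, and the explicit phase-wise integration giving the greedy draining time $t_k^0 = T_k^\ast - (r-\delta)$ and hence the stated cost. Your explicit running-integral equivalence handling the saturation of $f_{R_{i_k}}$ at zero is a careful spelling-out of a step the paper leaves implicit, not a different approach.
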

\begin{proof}
    \lemref{lemma:greedy:control} shows that the optimal cost $T_k^\ast$ of the local \ocpref{ocp:local:first} is bounded below by $2r$, which shows that the greedy policy achieves $s^\ast(T_k^\ast) = s_k^\psi$, in other words becomes feasible. Any other control policy connecting $s_k^\varphi$ with $s_k^\psi$ in $T_k^\ast$ units of time satisfies $\| s^\ast(t) - x_{i_k}\| \leq \| s(t) - x_{i_k} \|$ for all $t$, which implies $R_{i_k}^\ast(t) \leq R_{i_k}(t)$ for all $t \in [0,T_k^\ast]$. Further, a straightforward computation shows that $R_{i_k}^\ast(t_k^0)$, where $t_k^0 = T_k^\ast - (r-\delta)$, is given by 
    \begin{equation*}
        \check{R}_{i_k} + \left(A - \tfrac{2}{3}B\right)r + \delta(A - B) + \tfrac{\delta^3}{3 r^3}B + (T_k^\ast - 2r)(A - B).
    \end{equation*}
    Plugging in $T_k^\ast$ shows $R_{i_k}^\ast(t_k^0) = 0$. 
\end{proof}

Motivated by the above characterization we reformulate~\eqref{ocp:local:first} into the smooth \ac{ocp}
\begin{mini!}
    {u(\cdot), t_k^0, s_k^0}{\int_0^{t_k^0} \dt + \| s_k^\psi - s_k^0 \| }{\label{ocp:local}}{}
    \addConstraint{\dot{s}(t)}{= u(t)}
    \addConstraint{\dot{R}_{i_k}(t)}{= A_{i_k} - B_{i_k} p_{i_k}(s(t)) \label{ocp:local:R}}
    \addConstraint{\|u(t)\|^2}{\leq 1}
    \addConstraint{R_{i_k}(t_k^0)}{\leq 0 \label{ocp:local:draining}}
    \addConstraint{s(0)}{= s_k^\varphi \label{ocp:local:phi}}
    \addConstraint{s(t_k^0)}{= s_k^0 \label{ocp:local:psi}}
    \addConstraint{\|s_k^0 - x_{i_k}\|}{= \delta_{i_k} \label{ocp:zero:level:set:constraint}}
    \addConstraint{R_{i_k}(0)}{= \check{R}_k. \label{ocp:local:Rcheck}}
\end{mini!}

\begin{theorem}\label{theorem:exact:reformulation}
    The relaxation~\eqref{ocp:local} is exact, i.e., any optimal trajectory $s^\ast$ of \ocpref{ocp:local} is also optimal for~\eqref{ocp:local:first}. Further, the respective uncertainty trajectory can be recovered from the relaxed counterpart.
\end{theorem}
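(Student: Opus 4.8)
The plan is to show that~\eqref{ocp:local:first} and~\eqref{ocp:local} have the same optimal value by a two-sided comparison, and then to read off both exactness and the uncertainty recovery from the two constructions. First I would show that the optimal value of~\eqref{ocp:local} is no larger than that of~\eqref{ocp:local:first}. Let $(s^\ast, R_{i_k}^\ast, T_k^\ast)$ be optimal for~\eqref{ocp:local:first}. \theoref{theorem:unique:zero:level:set} supplies the unique $t_k^0 \in (0, T_k^\ast)$ with $\| s^\ast(t_k^0) - x_{i_k} \| = \delta_{i_k}$ and $R_{i_k}^\ast(t_k^0) = 0$, and asserts that $u^\ast$ equals the constant unit control~\eqref{eq:optimal:control:out:2} on $[t_k^0, T_k^\ast]$; integrating that control gives $T_k^\ast - t_k^0 = \| s_k^\psi - s^\ast(t_k^0) \|$. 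Set $s_k^0 := s^\ast(t_k^0)$ and let $\hat R$ solve the \emph{unclipped} dynamics~\eqref{ocp:local:R} along $s^\ast$ restricted to $[0,t_k^0]$ with $\hat R(0) = \check{R}_k$. Since $f_{R_{i_k}}(R,s) \ge A_{i_k} - B_{i_k} p_{i_k}(s)$ pointwise (the two differ only when $R = 0$ and $A_{i_k} - B_{i_k}p_{i_k}(s) < 0$, where $f_{R_{i_k}} = 0$ is the larger value) and the unclipped right-hand side is independent of $R_{i_k}$, comparing the integral representations of the two solutions gives $\hat R(t) \le R_{i_k}^\ast(t)$ on $[0,t_k^0]$, hence $\hat R(t_k^0) \le 0$. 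Thus $(s^\ast|_{[0,t_k^0]}, \hat R, t_k^0, s_k^0)$ is feasible for~\eqref{ocp:local} with objective $t_k^0 + \| s_k^\psi - s_k^0 \| = t_k^0 + (T_k^\ast - t_k^0) = T_k^\ast$, which gives the claimed bound.

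Next I would prove the reverse bound, which simultaneously constructs the optimal trajectory of~\eqref{ocp:local:first}. Let $(s^\ast, \hat R^\ast, t_k^0, s_k^0)$ be optimal for~\eqref{ocp:local} and put $T_k := t_k^0 + \| s_k^\psi - s_k^0 \|$. Extend $s^\ast$ to $\tilde s$ on $[0, T_k]$ by appending the unit-speed straight segment from $s_k^0$ to $s_k^\psi$; the concatenated control obeys $\| u \| \le 1$, so $\tilde s$ satisfies the boundary and input constraints of~\eqref{ocp:local:first}. Let $\tilde R$ solve the \emph{clipped} dynamics~\eqref{ocp:local:first:R} along $\tilde s$ with $\tilde R(0) = \check{R}_k$; then $\tilde R \ge 0$ on $[0,T_k]$ and, by the same pointwise inequality, $\tilde R \ge \hat R^\ast$ on $[0, t_k^0]$. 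Because $\check{R}_k \ge 0$, $\hat R^\ast$ is continuous, and $\hat R^\ast(t_k^0) \le 0$, there is a first time $\tau_0 \in [0, t_k^0]$ with $\hat R^\ast(\tau_0) = 0$; on $[0,\tau_0)$ we have $\hat R^\ast > 0$, hence $\tilde R > 0$, the clip is inactive, and $\tilde R \equiv \hat R^\ast$, so $\tilde R(\tau_0) = 0$ by continuity. Therefore $\min_{\tau \in [0,T_k]} \tilde R(\tau) = 0$, the remaining constraints of~\eqref{ocp:local:first} hold by construction, and $\tilde s$ is feasible with cost $T_k$, which gives the reverse bound.

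Combining the two bounds, the optimal values coincide; hence the feasible $\tilde s$ built in the second step from \emph{any} optimal $s^\ast$ of~\eqref{ocp:local} attains the optimal cost of~\eqref{ocp:local:first} and is therefore optimal for it, and the matching uncertainty trajectory is recovered as the unique solution $\tilde R$ of~\eqref{ocp:local:first:R} along $\tilde s$. I expect the main obstacle to be this second construction, specifically verifying that the clipped uncertainty $\tilde R$ of the extended trajectory genuinely satisfies the draining constraint~\eqref{ocp:local:first:draining}: this rests on coupling the monotone comparison $\tilde R \ge \hat R^\ast \ge 0$ with the coincidence $\tilde R \equiv \hat R^\ast$ up to the first zero of $\hat R^\ast$, and on dispatching the edge cases ($\check{R}_k = 0$, or $\hat R^\ast$ merely touching zero before rising again) that make the first-zero argument valid. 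A secondary point worth stating carefully is the identity $T_k^\ast - t_k^0 = \| s_k^\psi - s^\ast(t_k^0) \|$ used to equate the costs, which follows at once from the constant unit control on $[t_k^0, T_k^\ast]$ furnished by \theoref{theorem:unique:zero:level:set}.
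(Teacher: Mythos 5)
Your proposal is correct and takes essentially the same route as the paper: exactness of the modified terminal condition and terminal cost rests on \theoref{theorem:unique:zero:level:set}, and exactness of the relaxed dynamics rests on the comparison between the clipped (hybrid) and unclipped uncertainty, with your two-sided value argument simply making rigorous what the paper states informally. Your recovery step, re-integrating the hybrid dynamics~\eqref{ocp:local:first:R} along the extended trajectory, is equivalent to the paper's projection-and-shift recovery of the uncertainty from the relaxed solution.
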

\begin{proof}
    The given reformulation adopts a few changes. First, we replaced the original terminal condition with an adapted terminal condition plus a terminal cost. The adapted terminal condition now specifies a point on the zero level set of the dynamics, making it easy to check the draining condition. The terminal cost reflects precisely the amount of time it takes to travel from the adapted terminal point to the original terminal point. This part of the reformulation is exact due to \theoref{theorem:unique:zero:level:set}. Additionally, the reformulation replaced the hybrid dynamics~\eqref{ocp:local:first:R} by the smooth dynamics~\eqref{ocp:local:R}. This may cause the uncertainty $R_{i_k}$ to become negative. However, it is easy to see that the uncertainty of the original problem is $0$ if and only if the uncertainty of the reformulation is non-positive, as long as the agent trajectory does not leave and reenter the inner circle. Since this is the case for any optimal trajectory, we find that the inequality constraint~\eqref{ocp:local:draining} together with the relaxation~\eqref{ocp:local:R} reflect the original draining condition.

    Given a solution for the relaxed problem, we can reconstruct the true uncertainty trajectory by projecting its negative segment to zero, until the uncertainty gradient becomes nonnegative and then shifting the remaining trajectory piece by -$R_{i_k}(t_k^0)$ (see \figref{fig:relaxation:recovery}).
    \begin{figure}
        \centering
        \includegraphics[width=\linewidth]{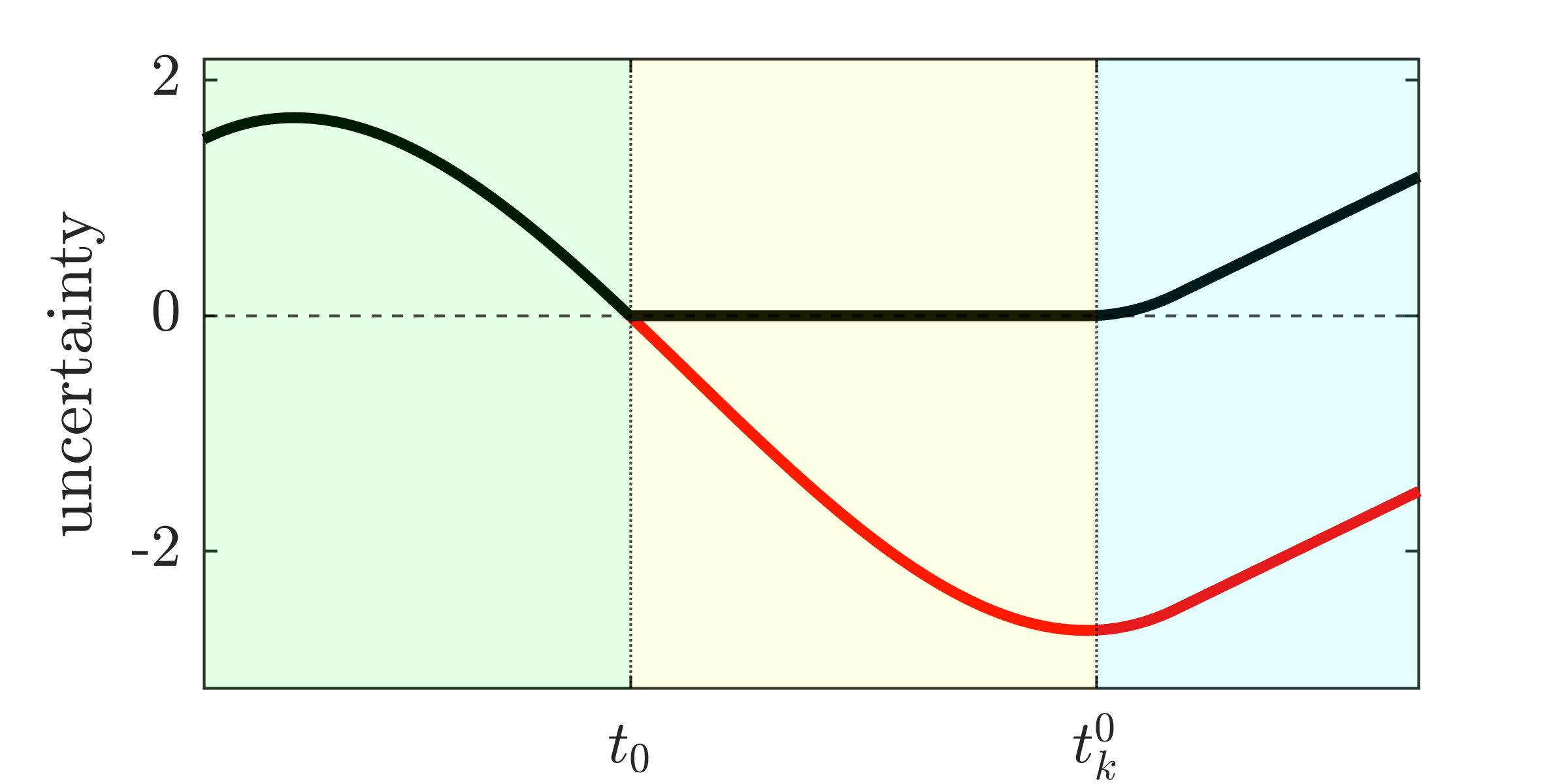}
        \caption{Projecting the relaxation (red) to zero during the yellow interval, and shifting it during the blue interval by the violation at $t_k^0$ recovers the true uncertainty trajectory (black).}
        \label{fig:relaxation:recovery}
    \end{figure}
\end{proof}

\section{Optimizing the Local Boundary Conditions}\label{sec:solving:coordinator}
The goal of this section is to minimize the cycle time $T$ given a sequence of target visits $i_1,i_2,\dots,i_K \in \T$. By decomposing the cycle into its local segments, we can express the total cycle time in terms of the entrance and departure points, i.e., as
\begin{equation*}
    T = \sum_{k=1}^K T_k^\ast(s_k^\varphi, s_k^\psi) + \Delta_k^\ast(s_k^\psi, s_{k+1}^\varphi),
\end{equation*}
where $T_k^\ast$ denotes the time of the $k$th draining trajectory, and $\Delta_k^\ast = \| s_k^\psi - s_{k+1}^\varphi \|$ denotes the $k$th switching time, i.e., the time taken from the departure point $s_k^\psi$ to the next entrance point $s_{k+1}^\varphi$. Now note that the entrance and departure points can be expressed as
\begin{equation*}
    s_k^\varphi = x_{i_k} + r_{i_k} \begin{pmatrix}
        \cos(\varphi_k) \\ \sin(\varphi_k)
    \end{pmatrix} ~ \andt ~ s_k^\psi = x_{i_k} + \delta_{i_k} \begin{pmatrix}
        \cos(\psi_k) \\ \sin(\psi_k)
    \end{pmatrix},
\end{equation*}
respectively, where we choose to optimize the inner departure point instead of the outer departure point motivated by \theoref{theorem:exact:reformulation}. Then the only degrees of freedom are the angles $\varphi_k$ and $\psi_k$. From here on we denote by $\varphi, \psi \in \R^{K}$ the vectors that contain all entrance/departure angles. We may then express the total cycle time $T(\varphi, \psi)$ as a function of these parameters. We are left to solve the unconstrained bilevel minimization problem
\begin{mini}
    {\varphi, \psi}{\sum_{k=1}^{K} T_k^\ast(\varphi_k, \psi_k) + \Delta_k^\ast(\psi_k, \varphi_{k+1}).}{\label{mini:global}}{}
\end{mini}
Solving this problem can be done on-line in the following manner. We first initialize the entrance and departure points using
\begin{equation}\label{eq:angles:initialization}
    \begin{split}    
    \psi_k = \mathrm{atan2}(\vartheta^k_y, \vartheta^k_x),
    \end{split},~\varphi_{k+1} = \mathrm{atan2}(-\vartheta^{k}_y, -\vartheta^{k}_x)
\end{equation} 
where $\vartheta^k = x_{k+1}-x_{k}$. This natural initialization places $s_k^\psi$ and $s_{k+1}^\varphi$ on the straight lines between the agents $i_k$ and $i_{k+1}$. Further, we and set the initial uncertainty vector to zero, or to an estimate of the steady-state values. We place the agent at the exit point of target $i_K$ and apply the constant control $(s_1^\varphi - s_K^\psi)/\Delta_K$. When the agent arrives at the first entrance point, we solve a discretized version of the smooth draining \ocpref{ocp:local} for the first target (the discretization is discussed in \secref{sec:numerical:results}). This provides an open loop control law during the draining period, or alternatively we may choose to apply a closed loop controller to track the computed trajectory. When reaching the departure point, we repeat the process for the next target. 

On completion of a cycle we compute the gradients 
\begin{equation}\label{eq:global:gradients}
    \begin{split}
        \frac{\partial T}{\partial \varphi_k} = \frac{\partial T_k^\ast}{\partial \varphi_k} + \frac{\partial \Delta^\ast_{k-1}}{\partial \varphi_k},\quad 
        \frac{\partial T}{\partial \psi_k} = \frac{\partial T_k^\ast}{\partial \psi_k} + \frac{\partial \Delta^\ast_{k}}{\partial \psi_k}, 
    \end{split}
\end{equation}
of the cycle time $T(\varphi, \psi)$. Note that gradients of $\Delta^\ast$ can be computed analytically, while evaluation of the gradients of $T_k^\ast$ can be done by using the dual variables, or Lagrange multipliers~\cite{nocedal1999numerical}, of the respective constraints. Let us denote by $\lambda_{\varphi_k}, \lambda_{\psi_k} \in \R^2$ the dual variables of the entrance~\eqref{ocp:local:phi} and departure constraints constraint~\eqref{ocp:local:psi} of the $k$th local \ac{ocp}, respectively. Applying the chain rule then provides 
\begin{equation*}
    \begin{split}
        \frac{\partial T_k^\ast}{\partial \varphi_k} &= \lambda_{\varphi_k}^\top r_{i_k} \begin{pmatrix} -\sin(\varphi_k) \\ \cos(\varphi_k) \end{pmatrix}, \\
        \frac{\partial T_k^\ast}{\partial \psi_k} &= \lambda_{\psi_k}^\top \delta_{i_k} \begin{pmatrix} -\sin(\psi_k) \\ \cos(\psi_k) \end{pmatrix}.  
    \end{split}
\end{equation*}
We then update the parameters using a simple gradient descent law
\begin{equation}
    \begin{split}
        \varphi_k \leftarrow \varphi_k - \alpha \frac{\partial T}{\partial \varphi_k}, \quad
        \psi_k \leftarrow \psi_k - \alpha \frac{\partial T}{\partial \psi_k},
    \end{split}
\end{equation}
where $\alpha$ is chosen using a diminishing step-size rule.

\section{Numerical Results}\label{sec:numerical:results}

\begin{figure*}
    \includegraphics[width=0.32\linewidth]{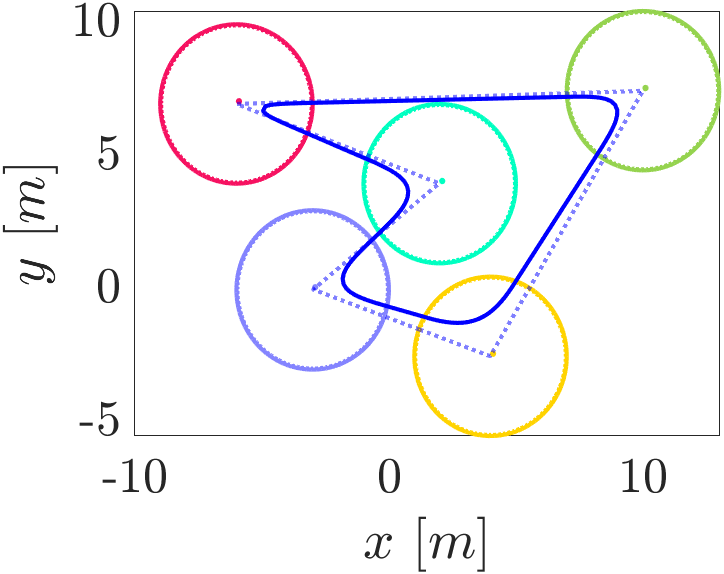}\hfill
    \includegraphics[width=0.32\linewidth]{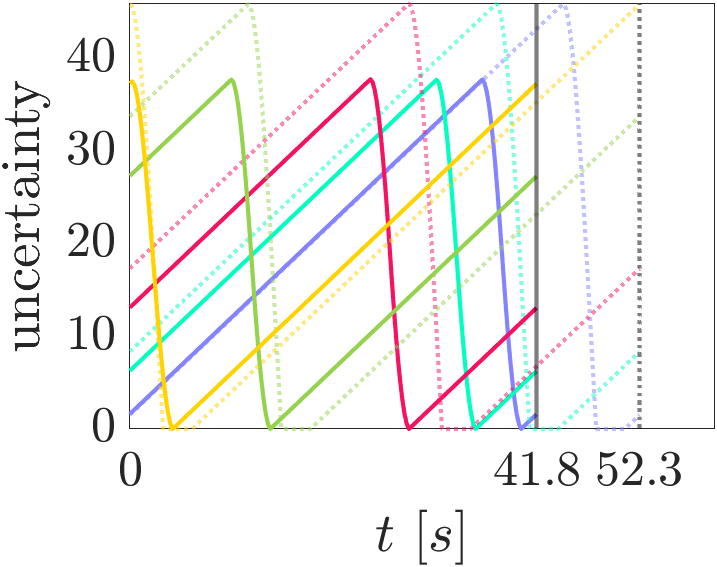}\hfill%
    \includegraphics[width=0.31\linewidth]{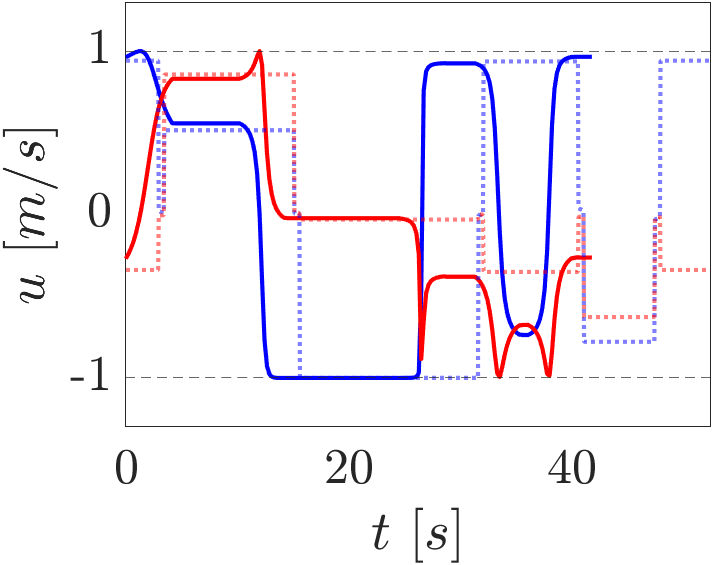}%
    \caption{Comparing the greedy solution (dotted) to the solution obtained via the proposed method (solid).}
    \label{fig:experiment:greedy}
\end{figure*}

Motivated by the fact that greedy policies are able to produce optimal solutions under certain scenarios (see~\lemref{lemma:greedy:control}), we compare the proposed method to the greedy control policy: move towards the target (and potentially dwell there) until uncertainty hits $0$, then move to the next target. We consider homogeneous targets with $A_i = 1$, $B_i = 20$, $r_i = 3$, and $R_i(0) = 0$. 

We discretize the local OCPs via direct multiple shooting~\cite{rao2009survey} using explicit Euler integration over $20$ nodes. We model this in Matlab via CasADi~\cite{andersson2019casadi} and then solve the resulting nonlinear programs using IPOPT~\cite{wachter2006implementation}. The underlying hardware consists of an Intel i5 processor running at 1.60GHz with 16GB of RAM. 

 \figref{fig:experiment:greedy} shows that even though the trajectories appear to be similar, our approach shows a 20\% reduction in total travel time over the greedy policy, as can be seen on the right plot depicting the respectively obtained steady-state cycles with periods of 41.8 and 52.3. Furthermore, the proposed method leads to a smooth control profile, which is favorable for tracking feasibility. We conjecture that this smoothness is a fundamental behavior for solutions of~\eqref{mini:global}, with exceptions being settings where the optimal entrance and departure points coincide, or settings where the the initial uncertainty is large enough to satisfy~\eqref{proposition:greedy:optimal:bound}. The following intuition justifies this conjecture: if the entrance (or departure) transition is non-smooth, then this indicates that the angle between the entrance and departure points is too large. Reducing the angle between the entrance and departure point always reduces the local draining time $T_k^\ast$. The only thing that prevents the entrance and departure points from converging to each other is the trade-off introduced by the potentially increased switching times $\Delta_{k-1}$ and $\Delta_{k}$. The equilibrium of this trade-off, namely the solution of~\eqref{mini:global}, may thus induce a natural property of smooth transitions. The only non-smoothness along trajectories could then occur within the local draining trajectory, which should only occur in the exceptions discussed above. 
 
 We now shift our attention to the computational efficiency of the proposed method. The left plot in \figref{fig:optimization:statistics} shows the CPU timings for this experiment, where we recall that the draining \ac{ocp} refers to~\eqref{ocp:local} and the switching \ac{ocp} refers to the (trivial) problem of switching from target's departure point to the next target's entrance point. The relaxation of the hybrid dynamics as well as the reduction of the state space dimensionality lead to trajectory segments computed in fractions of a second, suggesting real-time feasibility for systems with update rates of 50-80~Hz. The central plot in \figref{fig:optimization:statistics} shows that the method converges within 25 cycles to a steady-state and optimal solution. We conduct one final experiment by randomly initializing the entrance and departure points with angles drawn from a uniform distribution in the range of $[0,2\pi]$ over 100 experiments. The resulting transient cycle times all converge to the same steady-state solution as visualized in \figref{fig:optimization:statistics}. This figure suggests yet another conjecture: the problem of optimizing the entrance and departure points is convex. 

\begin{figure*}
    \includegraphics[width=0.32\linewidth]{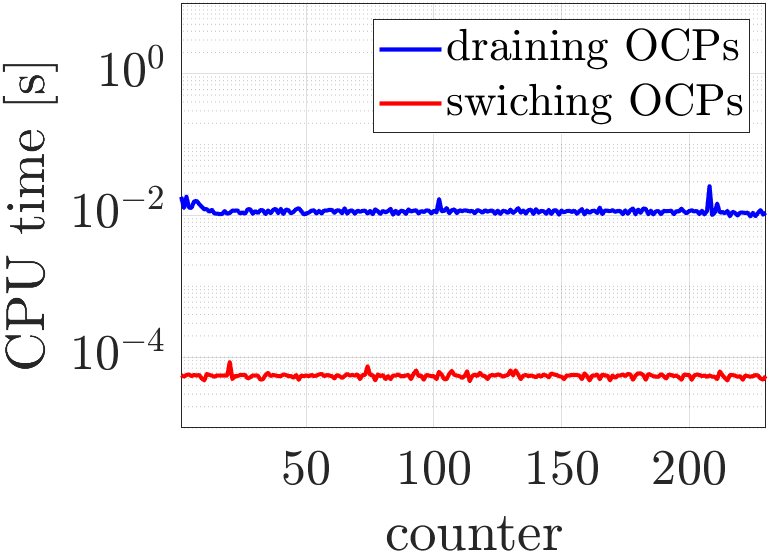}\hfill
    \includegraphics[width=0.31\linewidth]{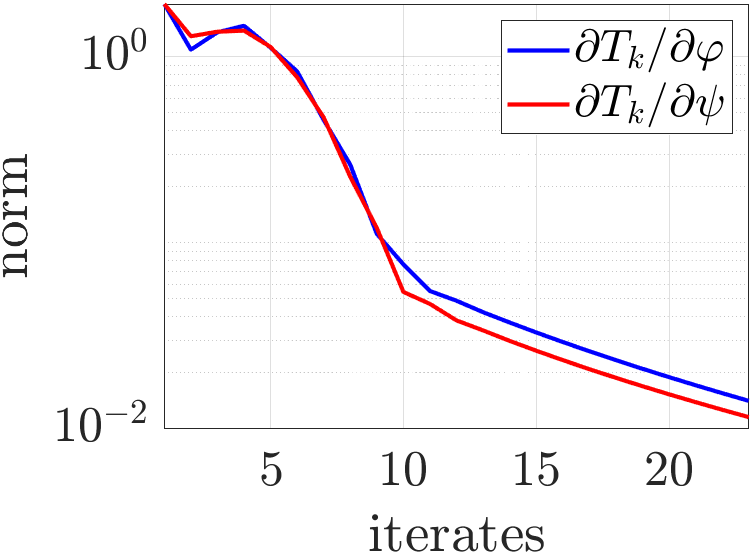}\hfill
    \includegraphics[width=0.30\linewidth]{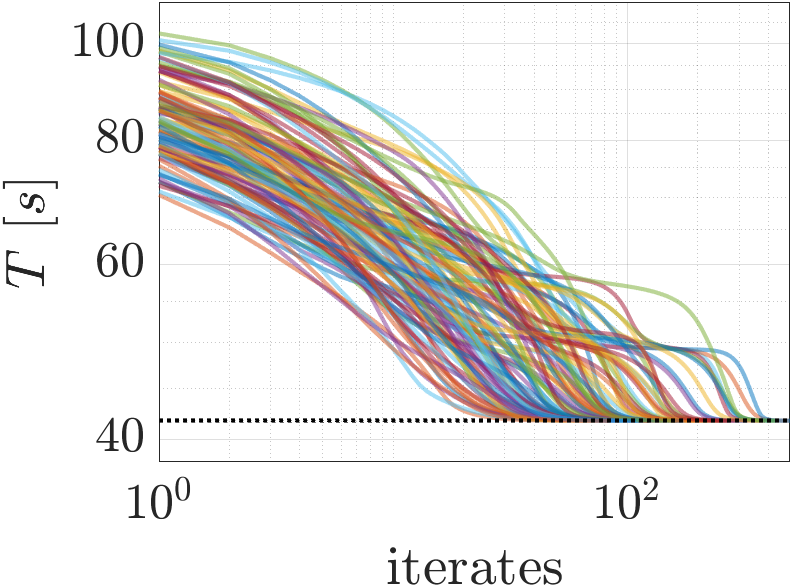}\hfill
    \caption{Depicting the CPU times required for solving the local OCPs (left), the convergence of the gradient norm (middle) and the convergences of 100 randomly perturbed initializations to the same optimizers (right).}
    \label{fig:optimization:statistics}
\end{figure*}

\section{Conclusion and Future Work}\label{sec:conclusion:future:work}
In this paper we considered a two-dimensional infinite-horizon \ac{pm} problem, in which we are interested in finding a minimum time draining cycle. By decomposing the problem into local OCPs on the lowest level, and coordinating their trajectories via higher level parameters, we were able to prove the existence of an exact relaxation for the underlying hybrid dynamics. These two layers are coupled within a bilevel optimization scheme, with which the agent's trajectory is optimized on-line. 

In future work we aim at analyzing the two conjectures made in \secref{sec:numerical:results}, as well as extending to scenarios to three dimensions, or to multi-agent settings. Furthermore, a particularly interesting extension could arise when the draining condition was relaxed. This could be done by introducing new parameters that specify the right hand side of~\eqref{ocp:local:first:draining}, which could then be optimized in a similar way as was done here with the entrance and departure points. Apart from extending the introduced approach, we also desire comparing the proposed approach to existing methods, e.g., by simply utilizing the general optimal control problem solver~\cite{nurkanovic2022nosnoc}, or to the method suggested in~\cite{ostertag2022trajectory}.

\bibliographystyle{ieeetr}
\bibliography{\BIBTEXDIR/biblio}

\newpage

\end{document}